\newtheorem{theorem}{Theorem}[section]
\newtheorem{proposition}[theorem]{Proposition}
\newtheorem{corollary}[theorem]{Corollary}
\newtheorem{lemma}[theorem]{Lemma}
\theoremstyle{definition}
\newtheorem{example}[theorem]{Example}
\newtheorem{remark}[theorem]{Remark}
\begin{document}

\title{Topological Degree of Shift Spaces on Monoids}

\keywords{Topological degree; entropy dimension; free generator; Cayley graph; finite representation; free-follower}
\subjclass{Primary 37A35, 37B10, 92B20}

\author{Jung-Chao Ban}
\address[Jung-Chao Ban]{Department of Applied Mathematics, National Dong Hwa University, Hualien 97401, Taiwan, ROC.}
\email{jcban@gms.ndhu.edu.tw}

\author[Chih-Hung Chang]{Chih-Hung Chang*}
\thanks{*Author to whom any correspondence should be addressed.} 
\author{Nai-Zhu Huang}
\address[Chih-Hung Chang and Nai-Zhu Huang]{Department of Applied Mathematics, National University of Kaohsiung, Kaohsiung 81148, Taiwan, ROC.}
\email{chchang@nuk.edu.tw; naizhu7@gmail.com}

\date{June 28, 2018}

\baselineskip=1.5\baselineskip

\begin{abstract}
This paper considers the topological degree of $G$-shifts of finite type for the case where $G$ is a nonabelian monoid. Whenever the Cayley graph of $G$ has a finite representation and the relationships among the generators of $G$ are determined by a matrix $A$, the coefficients of the characteristic polynomial of $A$ are revealed as the number of children of the graph. After introducing an algorithm for the computation of the degree, the degree spectrum, which is finite, relates to a collection of matrices in which the sum of each row of every matrix is bounded by the number of children of the graph. Furthermore, the algorithm extends to $G$ of finite free-followers.
\end{abstract}

\maketitle

\section{Introduction}

Let $\mathcal{A}$ be a finite alphabet. Given $d \in \mathbb{N}$, a \emph{configuration} is a function from $\mathbb{Z}^d$ to $\mathcal{A}$, and a pattern is a function from a finite subset of $\mathbb{Z}^d$ to $\mathcal{A}$. A subset $X \subseteq \mathcal{A}^{\mathbb{Z}^d}$ is called a \emph{shift space} if $X$, denoted by $X = \mathsf{X}_{\mathcal{F}}$, consists of configurations which avoid patterns from some set $\mathcal{F}$ of patterns. A shift space is called a \emph{shift of finite type} (SFT) if $\mathcal{F}$ is a finite set; $\mathbb{Z}^d$ acts on $X$ by translation of configurations making $X$ a symbolic dynamical system. One of the many motivations in studying symbolic dynamical systems is it helps for the investigation of hyperbolic topological dynamical systems. The interested reader can consult standard literature such as \cite{Bow-1975, Rue-1978}.

While almost all properties of SFTs are decidable for $d=1$ (cf.~\cite{LM-1995}), the investigation of SFTs is rife for $d \geq 2$ since many undecidability issues company with it. It is even undecidable if an SFT is nonempty \cite{Berger-MAMS1966}. Different kinds of mixing properties have been introduced for examining the existence and denseness of periodic configurations \cite{BPS-TAMS2010}. A straightforward generalization is considering SFTs on $G$ which is associated with some algebraic structure. Whenever $G$ is a free monoid, it has been demonstrated that many such issues do not arise. For instance, the conjugacy between two irreducible $G$-SFTs is decidable \cite{AB-TCS2012}; furthermore, the nonemptiness, extensibility, and the existence of periodic configurations are decidable for $G$-SFTs \cite{BC-JMP2017, BC-TAMS2017}. Aside from the qualitative behavior of $G$-SFTs, the phenomena from the computational perspective are also fruitful \cite{BC-2017a, BC-2017b, PS-TCS2018, Piantadosi-DCDS2008}.

For the case where $G = \mathbb{Z}^1$, the topological entropy of an SFT relates to the spectral radius of an integral matrix, and the entropy spectrum (i.e., the set of entropies) of SFTs is the set of logarithms of Perron numbers \cite{Lind-ETDS1984, LM-1995}. When $G = \mathbb{Z}^d$ for $d \geq 2$, the entropy of an SFT is a right recursively enumerable number which may not be algebraic and is not computable \cite{HM-AoM2010, MP-ETDS2013, PS-TAMS2015}. However, the story is quite different when $G$ is a free monoid.

Suppose that $G$ is a finitely generated free monoid. Let $\Sigma$ be a finite set which generates $G$. An element $g \in G$ is called an \emph{$i$-word} provided the minimal expresion of $g$ is $g = g_1 g_2 \cdots g_i$ for some $g_1, \ldots, g_i \in \Sigma$. For $n \in \mathbb{N}$, let $\Gamma_n$ denote the set of $n$-blocks in a $G$-SFT; an $n$-block is a pattern whose support consists of all $i$-words in $G$ for $i \leq n$. Suppose the cardinality of $\Gamma_n$ behaves approximately like $c(n) \lambda^{\kappa^n}$ for some $\lambda, \kappa \in \mathbb{R}$ and $c(n) = o(\kappa^n)$. It follows that such a $G$-SFT carries the topological entropy $\ln \lambda$ if $\kappa = d$ is the number of generators of $G$ and $0$ otherwise, and the topological degree (defined in \eqref{eq:defn-degree}) is $\ln \kappa$ \cite{BC-N2017}. The degree spectrum (i.e., the set of degrees) of $G$-SFTs relates to the set of Perron numbers \cite{BC-2017}, and Petersen and Salama reveal an algorithm to estimate the entropy of a hom-shift \cite{PS-TCS2018}. (A hom-shift, roughly speaking, is a $G$-SFT which is isotropic and symmetric; alternatively, a hom-shift is determined by one rule in each direction. For instance, a $d$-dimensional golden mean shift is a hom-shift. The interested reader is referred to \cite{CM-PJM2018}.) Furthermore, there is an infinite series expression for the entropy provided $|\mathcal{A}| = d = 2$ \cite{BC-2017b}.

This paper considers the topological degree of $G$-SFTs for the case where $G$ is a finitely generated nonabelian monoid. The topological degree of a $G$-shift reflects the idea of entropy dimension. More specifically, the topological degree of a $G$-shift having positive topological entropy is $\ln d$, where $d$ is the number of generators of $G$ and $G$ is a free monoid (cf.~\cite{BC-2017, BC-N2017, BC-TAMS2017}). In other words, the investigation of topological degree relates to discovering zero entropy systems. The importance of zero entropy systems has been revealed recently; many $\mathbb{Z}^d$-actions with zero entropy exhibit diverse complexities. See \cite{Carvalho-PM1997,CL-JSP2010,DHP-TAMS2011,DHK-2018} and the references therein for more details. This elucidation extends the computation of topological degree of $G$-SFT to the case where $G$ is a monoid with finite representation (Theorem \ref{thm:degree-essential-case}) or $G$ has finite \emph{free-followers} (defined in \eqref{eq:follower-set-definition}, see Section \ref{sec:finite-follower-case}). Notably, a finitely generated free monoid has finite representation, and a monoid with finite representation has finite free-followers.

On the other hand, Ban \emph{et al.}~\cite{BCH-2017} reveal that, if $G$ is a free monoid with $d$ generators, the degree spectrum of $G$-SFTs is a finite subset of Perron numbers less than or equal to $d$. Theorems \ref{thm:degree-general-case-2symbol} and \ref{thm:degree-general-case-ksymbol} elaborate that the topological degree of a $G$-SFT relates to the maximal spectral radius of a collection of integral matrices which are constrained by the structure of the Cayley graph of $G$. Meanwhile, the necessary and sufficient conditions for a $G$-SFT having full topological degree are also addressed, which provide a criterion for determining whether a $G$-SFT has zero entropy.

The introduction ends with a summary of the remainder of the paper. Whenever $G$ is a monoid such that a matrix $A$ determines the relationships among the generators of $G$ and $G$ has \emph{finite representation} (see Section \ref{sec:definition}), the coefficients of the characteristic polynomial of $A$ relate to the number of children of the Cayley graph of $G$ (Theorem \ref{thm:char-poly-A-xi}). After revealing an algorithm for the computation of the topological degree (Theorem \ref{thm:degree-algorithm}), the degree spectrum (Theorems \ref{thm:degree-general-case-2symbol} and \ref{thm:degree-general-case-ksymbol}) extends the previous result under the hypothesis that $G$ is free (cf.~\cite{BCH-2017}). Furthermore, Section \ref{sec:finite-follower-case} extends the algorithm to the case where $G$ has finite free-followers.

\section{Definition and Notation} \label{sec:definition}

Although most results in this investigation extend to groups with finite representation, the present paper focuses on shift spaces on monoids for clarity. Let $d$ be a positive integer. A \emph{semigroup} is a set $G = \langle \Sigma | R \rangle$ together with a binary operation which is closed and associative, where $\Sigma = \{s_1, \ldots, s_d\}$ is the set of generators and $R$ is a set of equivalences which describe the relationships among the generators. A \emph{monoid} is a semigroup with an identity element $e$.

Given a finite set of generators $\Sigma = \{s_1, s_2, \ldots, s_d\}$ and a $d \times d$ binary matrix $A$. A monoid $G$ of the form $G = \langle \Sigma | R_A \rangle$ means that
$$
s_i s_j = s_i \quad \text{if and only if} \quad A(i, j) = 0.
$$
Alternatively, $s_i$ is a right (resp.~left) free generator if and only if $A(i, j) = 1$ ($A(j, i) = 1$) for $1 \leq j \leq d$. Let $\Sigma_R$ (resp.~$\Sigma_L$) denote the set of right (resp.~left) free generators of $G$. For each $g \in G$, the \emph{length} $|g|$ indicates the number of generators used in its minimal presentation; that is,
$$
|g| = \min \{j: g = g_1 g_2 \cdots g_j, g_i \in \Sigma \text{ for } 1 \leq i \leq j\}.
$$
Suppose that $C = (V, E)$ is the Cayley graph of $G$. Define a subgraph $F = (V_F, E_F) \subseteq C$ as follows.
\begin{enumerate}[\bf (i)]
\item $g = g_1 \cdots g_n \in V_F$ if $g_n$ is the unique right free generator in $g$;
\item if $g_1 \cdots g_n \in V_F$, then $g_1 \cdots g_j \in V_F$ for each $j \leq n$;
\item $(g, g') \in E_F$ if and only if $(g, g') \in E$ and $g, g' \in V_F$.
\end{enumerate}
A monoid $G$ has a \emph{finite representation} if $F$ is a finite graph. For the rest of this paper, $G = \langle \Sigma | R_A \rangle$ denotes a monoid with a finite representation unless otherwise stated. See Example \ref{eg:G-A3x3-semigroup}.

\begin{figure}
\centering
\begin{subfigure}{\textwidth}
\begin{center}
\includegraphics[scale=0.8,page=1]{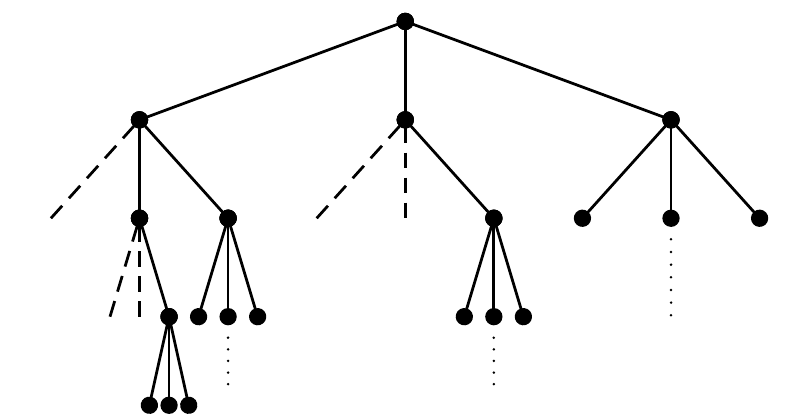}
\end{center}
\caption{Part of Cayley graph of monoid $G$ in Example \ref{eg:G-A3x3-semigroup}.}
\label{fig:G-A3x3-semigroup-C}
\end{subfigure}

\begin{subfigure}{\textwidth}
\begin{center}
\includegraphics[scale=0.8,page=2]{DegreeFiniteMonoid-20180628-pics-arxiv}
\end{center}
\caption{Finite representation of the Cayley graph $C$ of Example \ref{eg:G-A3x3-semigroup}.}\label{fig:G-A3x3-semigroup-CF}
\end{subfigure}

\caption{The Cayley graph of monoid $G$ in Example \ref{eg:G-A3x3-semigroup} has a finite representation. The generators $s_1, s_2, s_3$ satisfy the equivalences $s_1^2 = s_1$ and $s_2 s_1 = s_2^2 = s_2$. A pseudo-identity $e$ makes the Cayley graph of $G$ strongly connected.}\label{fig:G-A3x3-semigroup}
\end{figure}

\begin{example} \label{eg:G-A3x3-semigroup}
Let $d = 3$ and let
$$
A =
\begin{pmatrix}
0 &1 &1 \\
0 &0 &1 \\
1 &1 &1
\end{pmatrix}.
$$
The generators $s_1, s_2, s_3$ satisfy the equivalences determined by $A$; that is,
$$
s_1^2 = s_1, s_2 s_1 =  s_2, \text{ and } s_2^2 = s_2.
$$
It follows that $G$ has a finite representation, sees Figure \ref{fig:G-A3x3-semigroup} for the Cayley graph of $G$ together with a pseudo-identity and the graph of its finite representation.
\end{example}

Let $\mathcal{A} = \{1, 2, \ldots, k\}$ be a finite alphabet, where $k$ is a natural number greater than one. A \emph{labeled tree} is a function $t: G \to \mathcal{A}$. For each $g \in G$, $t_g = t(g)$ denotes the label attached to the vertex $g$ of the Cayley graph of $G$. The \emph{full shift} $\mathcal{A}^G$ collects the labeled trees, and the \emph{shift map} $\sigma: G \times \mathcal{A}^G \to \mathcal{A}^G$ is defined as $(\sigma_g t)_{g'} = t_{gg'}$ for $g, g' \in G$.

For each $n \geq 0$, let $\Delta_n = \{g \in G: |g| \leq n\}$ denote the initial $n$-subgraph of the Cayley graph. An \emph{$n$-block} is a function $\tau: \Delta_n \to \mathcal{A}$. A labeled tree $t$ \emph{accepts} an $n$-block $\tau$ if there exists $g \in G$ such that $t_{g g'} = \tau_{g'}$ for all $g' \in \Delta_n$; otherwise, $\tau$ is a \emph{forbidden block} of $t$ (or $t$ \emph{avoids} $\tau$). A \emph{$G$-shift space} is a set $X \subseteq \mathcal{A}^G$ of all labeled trees which avoid all of a certain set of forbidden blocks.

\section{Characterization of Finite Representation} \label{sec:finite-representation-characteristic-polynomial}

For each $n \in \mathbb{N}$, let
$$
\xi_n = \# \{g \in G: |g| = n, g_n \text{ is the only right free generator}\}
$$
be the number of $n$-words which contain exactly one right free generator and end in it. Theorem \ref{thm:char-poly-A-xi} reveals $\{\xi_n\}$ plays an important role in the characteristic polynomial of $A$.

\begin{theorem}\label{thm:char-poly-A-xi}
Suppose $G = \langle \Sigma | R_A \rangle$ is a monoid determined by a binary matrix $A$. Then the characteristic polynomial of $A$ is
\begin{equation}\label{eq:char-poly-A-xi}
f(\lambda) = \lambda^d - \sum_{i=1}^d \xi_i \lambda^{d-i}.
\end{equation}
\end{theorem}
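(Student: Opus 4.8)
The plan is to establish the equivalent rational-function identity $\det(I - xA) = 1 - \sum_{n=1}^{d}\xi_n x^n$, from which \eqref{eq:char-poly-A-xi} follows by setting $x = \lambda^{-1}$ and using $\det(\lambda I - A) = \lambda^{d}\det(I - \lambda^{-1}A)$. First I would reorder the generators so that the right free generators $\Sigma_R$ come last; writing $P$ for their index set and $Q = \{1,\dots,d\}\setminus P$ for the rest, the defining condition for a right free generator forces every row of $A$ indexed by $P$ to equal the all-ones row $\mathbf{1}_d^{\top}$. Thus $A$ acquires the block shape whose $Q$-rows are $(A_{QQ}\mid A_{QP})$ and whose $P$-rows are all equal to $\mathbf{1}_d^{\top}$, where $A_{QQ}$ and $A_{QP}$ denote the indicated submatrices.

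The second step is to read off $\xi_n$ combinatorially. A minimal (reduced) expression $g = s_{i_1}\cdots s_{i_n}$ is exactly an $A$-admissible walk, that is $A(i_k,i_{k+1}) = 1$ for every $k$, so the elements counted by $\xi_n$ are the admissible walks $i_1,\dots,i_n$ with $i_1,\dots,i_{n-1}\in Q$ and $i_n\in P$. Encoding the internal $Q\to Q$ transitions by $A_{QQ}$ and the terminal $Q\to P$ transition by $A_{QP}$ gives $\xi_1 = |P|$ and $\xi_n = \mathbf{1}_Q^{\top} A_{QQ}^{\,n-2} A_{QP}\mathbf{1}_P$ for $n\ge 2$; hence the generating function is $\Xi(x) := \sum_{n\ge1}\xi_n x^n = |P|\,x + x^{2}\,\mathbf{1}_Q^{\top}(I - xA_{QQ})^{-1}A_{QP}\mathbf{1}_P$.

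The computational heart is a single rank-one determinant update. Writing $u$ for the indicator vector of $P$ and $B$ for the matrix obtained from $A$ by zeroing its $P$-rows, the all-ones structure of those rows gives $A = B + u\,\mathbf{1}_d^{\top}$. Since $B$ is block upper triangular with a vanishing $P$-block, one has $\det(I - xB) = \det(I - xA_{QQ})$ and an explicit triangular form for $(I - xB)^{-1}$. The matrix determinant lemma then yields $\det(I - xA) = \det(I - xB)\bigl(1 - x\,\mathbf{1}_d^{\top}(I - xB)^{-1}u\bigr)$, and substituting the triangular inverse collapses the correction term to exactly $\Xi(x)$. This produces the clean factorization
\[
\det(I - xA) = \det(I - xA_{QQ})\,\bigl(1 - \Xi(x)\bigr).
\]

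To finish I would invoke the finite-representation hypothesis, whose role is to guarantee that $A_{QQ}$ is nilpotent, so that $\det(I - xA_{QQ}) = 1$ and $\Xi(x)$ is a polynomial of degree at most $|Q|+1\le d$, i.e.\ $\xi_n = 0$ for $n>d$; the identity then reads $\det(I - xA) = 1 - \sum_{n=1}^{d}\xi_n x^n$, as required. The mechanism I expect to use is that a nontrivial cycle in $A_{QQ}$ produces arbitrarily long reduced words through $Q$ which, since every vertex reaches a right free generator once the pseudo-identity makes the Cayley graph strongly connected, extend to infinitely many words of the form counted by $\xi_n$, contradicting the finiteness of $F$. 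I expect the main obstacle to lie precisely here, in the two places where the monoid structure rather than the linear algebra intervenes: verifying that minimal presentations are unique, so that $\xi_n$ genuinely equals the admissible-walk count of the second step, and making rigorous exactly how finiteness of $F$ forces $A_{QQ}$ to be nilpotent and not merely $\xi_n\to 0$.
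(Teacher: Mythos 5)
Your proposal is correct, but it takes a genuinely different route from the paper's. The paper argues combinatorially with periodic words: Lemma~\ref{lem:periodic-word-contain-free-generator} shows every periodic word must contain a right free generator (else $F$ is infinite), whence $\xi_n=0$ for $n>d$; Proposition~\ref{prop:T-L-partition-Pn} then partitions the set $P_n$ of periodic $(n+1)$-words into translates $T(\Xi_n)$ and insertions $L(P_{n-i},\Xi_i)$, which yields the Newton-type trace recursion $\mathrm{tr}(A^n)=n\xi_n+\sum_{i=1}^{n-1}\mathrm{tr}(A^i)\,\xi_{n-i}$, and the theorem follows from the classical recursive formula for characteristic-polynomial coefficients in terms of traces \cite{ZD-1963}. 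You instead exploit the matrix structure directly: the all-ones rows at the right free generators, the rank-one decomposition $A=B+u\mathbf{1}_d^{\top}$, the matrix determinant lemma, and the walk-generating function for $\xi_n$, arriving at the factorization $\det(I-xA)=\det(I-xA_{QQ})\bigl(1-\Xi(x)\bigr)$. Your route is shorter and more self-contained (no Newton's identities, no insertion/translation combinatorics), and it isolates exactly where the finite-representation hypothesis enters, namely in forcing $A_{QQ}$ to be nilpotent; the paper's route, in exchange, produces the trace recursions as combinatorial identities about the monoid itself, proved entirely in the language of words.

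Concerning the two obstacles you flag: both dissolve, on the same terms the paper grants itself. Uniqueness of minimal presentations (reduced words as normal forms, i.e.\ admissible walks biject with monoid elements of the given length) is assumed implicitly by the paper as well --- it is precisely what underlies its claims $|P_n|=\mathrm{tr}(A^n)$ and $|\Xi_n|=\xi_n$ --- so your walk-count reading of $\xi_n$ stands or falls with the paper's own setup. As for nilpotency of $A_{QQ}$: this is exactly Lemma~\ref{lem:periodic-word-contain-free-generator} in matrix language. A cycle $q_1\to\cdots\to q_m\to q_1$ inside $Q$ is a periodic word containing no right free generator, and the paper's proof of that lemma places the words $(q_1\cdots q_m)^k$ in $V_F$ directly (its working definition of $V_F$ admits any reduced word with no right free generator before its final letter), so no extension to a right free generator and no strong-connectivity argument via the pseudo-identity is needed; $|V_F|<\infty$ is contradicted at once, and conversely $\mathrm{tr}(A_{QQ}^m)=0$ for all $m$ is nilpotency. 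One minor repair: your degree bound $|Q|+1\le d$ for $\Xi(x)$ presumes $P\neq\varnothing$; if $P=\varnothing$ then $\Xi\equiv 0$ and finiteness of $F$ forces $A=A_{QQ}$ nilpotent, so the identity degenerates correctly to $f(\lambda)=\lambda^d$.
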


Before proving Theorem \ref{thm:char-poly-A-xi}, it is essential to characterize the structure of the Cayley graph of $G$. Let
$$
P_n = \{g \in G: |g| = n+1 \text{ and } g_1 = g_{n+1}\}
$$
and
$$
\Xi _n = \{g \in P_n: g_n \text{ is the only right free generator}\}
$$
be the sets of periodic $(n+1)$-words and periodic $(n+1)$-words whose second last symbol is the one and only right free generator, respectively. It follows immediately that $|P_n| = \mathrm{tr}(A^n)$ and $|\Xi _n| = \xi_n$.

\begin{lemma}\label{lem:periodic-word-contain-free-generator}
For each $(n+1)$-word $g = g_1 g_2 \cdots g_n g_1 \in P_n$, there exists $1 \leq i \leq n$ such that $g_i$ is a right free generator.
\end{lemma}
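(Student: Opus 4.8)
```latex
\begin{proof}[Proof plan]
The plan is to argue by contradiction, showing that a periodic word containing \emph{no} right free generator cannot satisfy the periodicity constraint $g_{n+1} = g_1$. First I would recall the defining relation: $s_i$ fails to be a right free generator exactly when there is some $j$ with $A(i,j) = 0$, i.e.\ $s_i s_j = s_i$ for that $j$. The key observation is how the length $|g|$ interacts with the reduction $s_i s_j = s_i$: whenever a non--right--free generator $g_i$ is immediately followed by the ``right'' symbol, the concatenation collapses and the minimal presentation shortens. So the main structural fact I need is that in a \emph{minimal} presentation $g = g_1 g_2 \cdots g_n g_1$ of an element of $P_n$ (which has length exactly $n+1$ by definition), no such collapse occurs between consecutive symbols.

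Suppose toward a contradiction that none of $g_1, \ldots, g_n$ is a right free generator. Then each $g_i$ satisfies $A(i,\cdot)$ having at least one zero entry, so for each $g_i$ there exists a generator $s$ with $g_i s = g_i$. I would then examine the terminal juncture of the cyclic word: since the presentation wraps around with $g_{n+1} = g_1$, the segment $\cdots g_n g_1$ links a non--free generator $g_n$ to $g_1$. The crucial step is to show that the absence of any right free generator forces a reduction somewhere along $g_1 g_2 \cdots g_n g_1$, contradicting that this is the minimal presentation of a length--$(n+1)$ element. Concretely, I expect to track the prefix products $g_1, g_1 g_2, g_1 g_2 g_3, \ldots$ and show that if every $g_i$ is non--right--free, then at least one product $g_1 \cdots g_i g_{i+1}$ collapses back to $g_1 \cdots g_i$, which would make $|g| < n+1$.

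The main obstacle will be converting the local ``one zero entry per row'' condition into the global statement about the whole cyclic word. A single non--free generator $g_i$ only guarantees a collapse with \emph{some} specific follower $s$, not necessarily with the particular $g_{i+1}$ appearing in $g$. So the real content is to use the \emph{finite representation} hypothesis together with the periodicity $g_{n+1} = g_1$ to pin down where a collapse is unavoidable. I would lean on the finiteness of the subgraph $F$ from Section~\ref{sec:definition}: the reduction structure $s_i s_j = s_i$ means that once one enters a ``trapping'' block of non--free generators, the word can only be extended along edges that preserve rather than lengthen, so a genuine length--$n$ cycle returning to $g_1$ is impossible without passing through a right free generator. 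Assembling these observations should yield the existence of an index $i$ with $g_i \in \Sigma_R$.
\end{proof}
```
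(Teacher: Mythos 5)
Your overall strategy (contradiction plus an appeal to the finiteness of $F$) points in the right direction, but the plan has a genuine gap: you never identify the mechanism that converts ``no right free generator'' plus periodicity into a contradiction, and the two mechanisms you sketch both fail. First, you aim to force a collapse $g_1\cdots g_i g_{i+1} = g_1 \cdots g_i$ inside the given word, contradicting $|g| = n+1$; as you yourself observe, a non-right-free generator $g_i$ only satisfies $A(g_i, s) = 0$ for \emph{some} generator $s$, not for the particular successor $g_{i+1}$ appearing in $g$, so no local collapse can be extracted this way --- indeed, in Example \ref{eg:G-A3x3-semigroup} the word $s_1 s_2$ consists entirely of non-free generators and is irreducible. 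Second, your fallback (``once one enters a trapping block of non-free generators, the word can only be extended along edges that preserve rather than lengthen'') is false for the same reason: non-free generators do admit lengthening edges (in Example \ref{eg:G-A3x3-semigroup}, both $s_1 s_2$ and $s_1 s_3$ properly lengthen $s_1$), so there is no such trapping phenomenon. Your plan therefore ends exactly where the real work begins.

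The paper's proof is a one-line pumping argument that your plan misses. Since $g = g_1 \cdots g_n g_1 \in P_n$ has minimal length $n+1$, none of the \emph{cyclic} adjacent pairs $(g_1, g_2), \ldots, (g_{n-1}, g_n), (g_n, g_1)$ collapses. Every adjacent pair occurring in the power $(g_1 \cdots g_n)^m$ is one of these cyclic pairs, so $(g_1 \cdots g_n)^m$ is irreducible of length $mn$ for every $m \in \mathbb{N}$. If no $g_i$ were a right free generator, these powers would be infinitely many distinct elements of $G$ containing no right free generator at all, hence infinitely many vertices of the subgraph $F$, contradicting the finite representation hypothesis $|V_F| < \infty$. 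Note where the contradiction lives: not in the minimality of $g$'s own presentation (the periodic word itself need not admit any collapse, which is why your first thread cannot close), but in the fact that its arbitrarily long powers overflow the finite graph $F$.
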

\begin{proof} 
Suppose not, it comes immediately that $(g_1 \cdots g_n)^m \in G$ is a vertex of $F$ for all $m \in \mathbb{N}$, which contradicts to $|V_F| < \infty$. The proof is complete.
\end{proof}

\begin{lemma}
For each positive integer $n > d$, $\xi_n = 0$. That is, every $(n+1)$-word contains at least one right free generator.
\end{lemma}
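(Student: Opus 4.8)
The plan is to argue by contradiction and to reduce the claim to the periodic case already settled in Lemma \ref{lem:periodic-word-contain-free-generator}. Suppose $\xi_n > 0$ for some $n > d$, and fix a witness: an element $g \in G$ with $|g| = n$ whose minimal presentation $g = g_1 g_2 \cdots g_n$ has $g_n$ as its \emph{unique} right free generator, so that $g_1, \ldots, g_{n-1}$ are all non-right-free. First I would record that minimality forces $g$ to trace a path in the digraph of $A$: if some adjacent pair had $A(g_k, g_{k+1}) = 0$, then the relation $s_{g_k} s_{g_{k+1}} = s_{g_k}$ would shorten the presentation and contradict $|g| = n$; hence $A(g_k, g_{k+1}) = 1$ for all $1 \le k \le n-1$.

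The core of the argument is a pigeonhole step followed by the excision of a cycle. Since $g_n$ exhibits a right free generator, $\Sigma_R \neq \emptyset$, so there are at most $d - 1$ non-right-free generators. The prefix $g_1, \ldots, g_{n-1}$ consists of $n - 1 \ge d > d - 1$ symbols, every one of them non-right-free; pigeonhole therefore yields indices $a < b \le n-1$ with $g_a = g_b$. I would then form the cyclic segment $h = g_a g_{a+1} \cdots g_{b-1} g_a$, whose first and last symbols coincide and whose length is $b - a + 1$. The interior adjacencies of $h$ are inherited from $g$, and the closing adjacency is $A(g_{b-1}, g_a) = A(g_{b-1}, g_b) = 1$, using reducedness of $g$ together with $g_a = g_b$; thus $h$ is reduced and $h \in P_{b-a}$.

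Applying Lemma \ref{lem:periodic-word-contain-free-generator} to $h$ now produces a right free generator among its symbols $g_a, \ldots, g_{b-1}$. But each of these belongs to the prefix $g_1, \ldots, g_{n-1}$ and is non-right-free by the choice of $g$, a contradiction. Hence no such witness exists and $\xi_n = 0$ for every $n > d$; equivalently, once the length exceeds $d$ no word can carry its sole right free generator in terminal position. (One could alternatively pump the excised cycle indefinitely to manufacture infinitely many vertices of $F$, contradicting $|V_F| < \infty$ directly; I prefer the reduction above because it reuses the established periodic lemma verbatim.)

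I expect the main obstacle to be pinning down the exact threshold $n > d$ rather than a weaker $n > d+1$. This hinges on counting the prefix against the pool of \emph{non}-right-free generators, whose size is at most $d-1$ precisely because the terminal symbol $g_n$ already accounts for a right free generator; counting naively against all $d$ generators would blur the sharp bound. A secondary point requiring care is the verification that the excised segment $h$ is genuinely reduced and so a bona fide element of $P_{b-a}$, which is exactly where the identity $A(g_{b-1}, g_a) = 1$ is needed, since a non-reduced $h$ could collapse and place Lemma \ref{lem:periodic-word-contain-free-generator} out of reach.
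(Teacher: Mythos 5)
Your proof is correct and takes essentially the same route as the paper's: a pigeonhole argument producing a repeated symbol in the word, followed by an application of Lemma \ref{lem:periodic-word-contain-free-generator} to the resulting periodic segment. The only differences are cosmetic refinements---you pigeonhole the non-right-free prefix against the pool of at most $d-1$ non-free generators (the paper pigeonholes the first $d+1$ symbols against all $d$ generators, reaching the same threshold $n > d$), and you explicitly verify that the excised segment is reduced and hence lies in $P_{b-a}$, a detail the paper leaves implicit.
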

\begin{proof} 
Suppose there exists $n > d$ and $g = g_1 \cdots g_n \in G$ such that $g_n$ is the only free generator. The pigeonhole  principle asserts that $g_i = g_j$ for some $1 \leq i < j \leq d+1$. Lemma \ref{lem:periodic-word-contain-free-generator} demonstrates that there exists $i \leq \ell \leq j$ such that $g_{\ell}$ is a right free generator, which is a contradiction. This derives the desired result.
\end{proof}

Let
$$
T(\Xi _n) = \{ u_i \cdots u_n u_1 \cdots u_i: i = 1, \ldots, n, u \in \Xi_n \}
$$
collect the translation of all elements of $\Xi_n$. Observe that $|T(\Xi_n)| = n\xi_n$. Let
$$
L(P_m, \Xi_n) = \{u_1 \cdots u_s v_1 \cdots v_n u_{s+1} \cdots u_{m+1}: s = \min_i \{i: u_i \in S_R\}, u \in P_m, v \in \Xi_n \},
$$
where $S_R$ is the set of right free generators. That is, $L(P_m, \Xi_n)$ consists of words obtained by inserting the initial $n$-subword of every $v = v_1 \cdots v_{n+1} \in \Xi_n$ in a periodic $(m+1)$-word $u$ right after the first right free generator of $u$. Obviously, $L(P_m, \Xi_n) \subseteq P_{m+n}$.

\begin{lemma}\label{lem:1st2nd-generator-distance}
Suppose $x = x_1 \cdots x_{n+1} \in P_n$ contains at least two right free generators. Let $x_s$ and $x_r$ be the first and second free generators, respectively. Then
$$
r - s = l \quad \text{if and only if} \quad x \in L(P_{n-l}, \Xi_l)
$$
\end{lemma}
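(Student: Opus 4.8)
The plan is to prove the two implications separately, relying throughout on the fact that a word $g_1 \cdots g_k$ is a reduced presentation (so that $|g_1 \cdots g_k| = k$) precisely when $A(g_i, g_{i+1}) = 1$ for every $1 \le i \le k-1$; this is the content of the relation $R_A$ and is consistent with $|P_n| = \mathrm{tr}(A^n)$. The only structural input I need about a right free generator $s_i$ is that its row of $A$ is identically $1$, i.e. $s_i s_j \ne s_i$ for every $j$, so a right free generator may be followed by any generator without shortening the word. I read the first and second right free generators $x_s, x_r$ of $x = x_1 \cdots x_{n+1}$ off in the linear order, so that the positions $s+1, \ldots, r-1$ carry no right free generator.

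For the forward implication, suppose $x \in L(P_{n-l}, \Xi_l)$, say $x = u_1 \cdots u_s v_1 \cdots v_l u_{s+1} \cdots u_{n-l+1}$ with $u \in P_{n-l}$, $v \in \Xi_l$, and $s = \min_i\{i : u_i \in S_R\}$. By minimality of $s$ the symbols $u_1, \ldots, u_{s-1}$ are not right free, so $x_s = u_s$ is the first right free generator of $x$. Because $v \in \Xi_l$, the symbols $v_1, \ldots, v_{l-1}$ (occupying positions $s+1, \ldots, s+l-1$ of $x$) are not right free while $v_l$ (position $s+l$) is. Hence the second right free generator of $x$ is $x_{s+l} = v_l$, giving $r = s + l$ and $r - s = l$.

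For the reverse implication, assume $x \in P_n$ has its first and second right free generators at positions $s$ and $r = s + l$. I would define $v := x_{s+1} x_{s+2} \cdots x_{s+l} x_{s+1}$ and $u := x_1 \cdots x_s x_{s+l+1} \cdots x_{n+1}$; that is, excise the block $x_{s+1} \cdots x_{s+l}$ from $x$ and splice $x_s$ directly onto $x_{s+l+1}$. I then verify $v \in \Xi_l$: it has length $l+1$ with $v_1 = v_{l+1} = x_{s+1}$, its consecutive pairs inherited from $x$ are admissible, the closing pair $x_{s+l} x_{s+1}$ is admissible because $x_{s+l} = x_r$ is right free, and $v_l = x_r$ is the unique right free generator of $v$ since $x_{s+1}, \ldots, x_{s+l-1}$ lie strictly between the first and second right free generators of $x$ and are therefore not right free. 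Likewise $u \in P_{n-l}$: it has length $n-l+1$, is closed since $u_1 = x_1 = x_{n+1} = u_{n-l+1}$, and its single new adjacency $x_s x_{s+l+1}$ is admissible because $x_s$ is right free; moreover $u_1, \ldots, u_{s-1} = x_1, \ldots, x_{s-1}$ carry no right free generator, so the first right free generator of $u$ again sits at position $s$. Inserting $v_1 \cdots v_l = x_{s+1} \cdots x_{s+l}$ after position $s$ of $u$ returns exactly $x$, whence $x \in L(P_{n-l}, \Xi_l)$.

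The main obstacle is the bookkeeping in the reverse direction: one must be certain that the cut-and-paste produces genuine reduced words of the prescribed lengths rather than words that collapse under $R_A$. This is exactly where the definition of a right free generator is used, since the two freshly created adjacencies $x_s \to x_{s+l+1}$ in $u$ and $x_{s+l} \to x_{s+1}$ closing $v$ are admissible only because $x_s$ and $x_{s+l} = x_r$ have full rows in $A$. A secondary point requiring care is extremality: checking that $s$ remains the first right free position of $u$ after excision and that $v$ has $x_r$ as its sole right free generator, both of which follow from $x_s, x_r$ being the first two right free generators of $x$, so that all symbols strictly between them are non-free. Degenerate configurations, such as $s = 1$ or adjacent right free positions (so $l = 1$), should be checked but present no new difficulty.
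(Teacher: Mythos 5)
Your proof is correct and takes essentially the same route as the paper's: both directions rest on the identical excision/splicing decomposition $u = x_1 \cdots x_s x_{s+l+1} \cdots x_{n+1}$, $v = x_{s+1} \cdots x_{s+l} x_{s+1}$, with the full-row property of right free generators justifying the two newly created adjacencies $x_s x_{s+l+1}$ and $x_{s+l} x_{s+1}$. If anything, you are slightly more careful than the paper, which leaves implicit the check that the first right free position of $u$ is still $s$ so that reinserting $v_1 \cdots v_l$ reconstructs $x$ exactly.
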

\begin{proof}
If $x \in L(P_{n-l}, \Xi_l)$, then $x = u_1 \cdots u_s v_1 \cdots v_l u_{s+1} \cdots u_{n-l+1}$ for some $u \in P_{n-l}, v \in \Xi_l$, where $s = \min\{i: u_i \in S_R\}$. Since $v \in \Xi_l$, $x_{s + l} = v_l$ is the second right free generator in $x$. This concludes that $r - s = ( s + l ) - s = l$.

For each $x = x_1 \cdots x_{n+1} \in P_n$ which contains at least two right free generators, let $u = x_1 \cdots x_s x_{s+l+1} \cdots x_{n+1}$ and let $v = x_{s+1} \cdots x_{s+l} x_{s+1}$. Since $u_s = x_s \in S_R $, $x_s x_{s+l-1}$ is a two-word. Furthermore, $x \in P_n$ and $x_1 = x_{n+1}$ indicates that $u_1 = u_{n-l+1}$. Alternatively, $u = u_1 \cdots u_{n-l+1} \in P_{n-l}$. Similarly, $v_l = x_{s+l} \in S_R$ shows that $x_{s+l} x_{s+1}$ is also a two-word. The fact of $x_r = x_{s + l}$ being the second free generator elaborates that $v_1, \ldots, v_{l+1} \notin S_R$, $v_l\in S_R$, and $v_{l+1}= v_1$. Hence, $v \in \Xi_l$. Conclusively, $x \in L(P_{n-l}, \Xi_l)$. This completes the proof.
\end{proof}

Lemma \ref{lem:1st2nd-generator-distance} illustrates $L(P_{n-l}, \Xi_l) \bigcap L(P_{n-m}, \Xi_m) = \varnothing$ if and only if $l \neq m$. Proposition \ref{prop:T-L-partition-Pn}, additionally, reveals that the translation and insertion of $\Xi_n$ form a partition of periodic words.

\begin{proposition}\label{prop:T-L-partition-Pn}
For each $n \in \mathbb{N}$, $\{L(P_{n-i}, \Xi_i)\}_{i=1}^{n-1} \bigcup \{T(\Xi_n)\}$ forms a partition of $P_n$.
\end{proposition}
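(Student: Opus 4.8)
The plan is to sort each periodic word $x = x_1 \cdots x_{n+1} \in P_n$ by the number of its symbols that are right free generators, counted cyclically over the positions $1, \ldots, n$ (position $n+1$ repeats position $1$ because $x_1 = x_{n+1}$). Lemma \ref{lem:periodic-word-contain-free-generator} guarantees that this number is at least one, so $P_n$ is the disjoint union of the words with exactly one right free generator and the words with at least two. I intend to match the first family with $T(\Xi_n)$ and the second family with the sets $L(P_{n-i}, \Xi_i)$, and then argue that the assignment is simultaneously exhaustive and non-overlapping.

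First I would dispatch disjointness. For $i \neq j$ the observation recorded right after Lemma \ref{lem:1st2nd-generator-distance} already yields $L(P_{n-i}, \Xi_i) \cap L(P_{n-j}, \Xi_j) = \varnothing$, because lying in $L(P_{n-l}, \Xi_l)$ pins the gap between the first two right free generators to the value $l$. To peel $T(\Xi_n)$ off from every $L(P_{n-i}, \Xi_i)$, I would note the following structural observation: each word manufactured by the insertion that defines $L(P_{n-i}, \Xi_i)$ keeps the first right free generator of its base word $u \in P_{n-i}$ and also carries the free generator $v_i$ of the inserted block $v \in \Xi_i$, so it owns at least two right free generators; whereas a cyclic shift of any $u \in \Xi_n$ has precisely as many free generators as $u$ itself, namely one. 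Hence $T(\Xi_n)$ is disjoint from each $L(P_{n-i}, \Xi_i)$.

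Next I would establish covering. Containment in $P_n$ is clear for $T(\Xi_n)$ and is the already-noted inclusion $L(P_m, \Xi_l) \subseteq P_{m+l}$ for the $L$-sets. For the reverse, take $x \in P_n$. If $x$ has at least two right free generators, name the first two $x_s$ and $x_r$; then $1 \le s < r \le n$ forces $i := r - s$ into $\{1, \ldots, n-1\}$, and Lemma \ref{lem:1st2nd-generator-distance} deposits $x$ in $L(P_{n-i}, \Xi_i)$. The case where $x$ has a single right free generator $x_s$ is the crux: I would rotate $x$ cyclically so that $x_s$ is moved to position $n$, producing $u = x_{s+1} \cdots x_n x_1 \cdots x_s x_{s+1}$. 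Since a periodic $(n+1)$-word is exactly a closed walk of length $n$ in the graph with adjacency matrix $A$---the very fact behind $|P_n| = \mathrm{tr}(A^n)$---any cyclic rotation of $x$ is again an admissible periodic $(n+1)$-word, and rotation leaves the symbols untouched, so $x_s$ remains the unique right free generator, now sitting in position $n$. Therefore $u \in \Xi_n$, and reading $u$ starting from the slot that carries $x_1$ recovers $x$ as one of the shifts $u_i \cdots u_n u_1 \cdots u_i$, giving $x \in T(\Xi_n)$.

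The step I expect to be the real obstacle is exactly this treatment of the single-free-generator case: one has to be certain that the rotated word $u$ is still a reduced element of $G$ and genuinely belongs to $\Xi_n$, not merely that it looks like a cyclic rearrangement. I would lean on the closed-walk description of $P_n$ to secure both points at once, since it makes the irreducibility of $u$ automatic under rotation and shows that the lone free generator of $x$ is faithfully transported to position $n$. Combining non-overlap with exhaustiveness then says precisely that $\{L(P_{n-i}, \Xi_i)\}_{i=1}^{n-1} \cup \{T(\Xi_n)\}$ partitions $P_n$.
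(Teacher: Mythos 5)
Your proposal is correct and takes essentially the same route as the paper's proof: both split $P_n$ according to the number of right free generators, place words with at least two free generators into the appropriate $L(P_{n-i},\Xi_i)$ via Lemma \ref{lem:1st2nd-generator-distance}, and obtain disjointness by comparing free-generator counts. The only difference is that you spell out the cyclic-rotation argument (via the closed-walk description behind $|P_n| = \mathrm{tr}(A^n)$) showing that a word with exactly one free generator lies in $T(\Xi_n)$, a step the paper asserts without detail.
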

\begin{proof}
Obviously, $L(P_{n-i}, \Xi_i) \bigcap T(\Xi_n) = \varnothing$ for $1 \leq i \leq n-1$ since every element of $T(\Xi_n)$ accepts exactly one generator while $L(P_{n-i}, \Xi_i)$ consists of words which contain at least two free generators. The desired result comes immediately from
$$
P_n = \bigcup_{i=1}^{n-1} L(P_{n-i}, \Xi_i) \bigcup T(\Xi_n).
$$
Indeed, the definitions of $L(P_{n-i}, \Xi_i)$ and $T(\Xi_n)$ indicate that
$$
\bigcup_{i=1}^{n-1} L(P_{n-i}, \Xi_i) \bigcup T(\Xi_n) \subseteq P_n.
$$
For each $x \in P_n$, $x \in T(\Xi_n)$ if $x$ has exactly one free generator. Otherwise, $x$ has $x_s$ and $x_r$ as its first two free generators for some $s < r$. Let $l = r - s$. Lemma \ref{lem:1st2nd-generator-distance} shows that $x \in L(P_{n-l}, \Xi_l)$. The proof is complete.
\end{proof}

\begin{example}
Continue with Example \ref{eg:G-A3x3-semigroup}, recall that
\begin{equation*}
A=
\begin{pmatrix}
0 &1 &1 \\
0 &0 &1 \\
1 &1 &1
\end{pmatrix}
\end{equation*}
and $G$ has only one right free generator $s_3$. Then $\Xi_1 = \{s_3 s_3\} = T(\Xi_1) = P_1$. Since $\Xi_2$ consists of words of the form $u_1 s_3 u_1$,
$$
\Xi_2 = \{s_1 s_3 s_1, s_2 s_3 s_2\} \quad \text{and} \quad T(\Xi_2) = \{s_1 s_3 s_1, s_3 s_1 s_3, s_2 s_3 s_2, s_3 s_2 s_3\}.
$$
As defined above, $L(P_1, \Xi_1) = \{s_3 s_3 s_3\}$ collects the words obtained by inserting the first word of $\Xi_1$ in each word of $P_1$ right after the first right free generator. It follows that
$$
P_2 = \{s_1 s_3 s_1, s_2 s_3 s_2, s_3 s_1 s_3, s_3 s_2 s_3, s_3 s_3 s_3\} = T(\Xi_2) \bigcup L(P_1, \Xi_1).
$$

Similarly, $\Xi_3 = \{s_1 s_2 s_3 s_1\}$ and $T(\Xi_3) = \{s_1 s_2 s_3 s_1, s_2 s_3 s_1 s_2, s_3 s_1 s_2 s_3\}$.
\begin{align*}
L(P_2, \Xi_1) &= \{s_1 s_3 s_3 s_1, s_2 s_3 s_3 s_2, s_3 s_3 s_1 s_3, s_3 s_3 s_2 s_3, s_3 s_3 s_3 s_3\}, \\
L(P_1,\Xi_2) &= \{s_3 s_1 s_3 s_3, s_2 s_3 s_3 s_3\}.
\end{align*}
Then
\begin{align*}
P_3 &= \{s_1 s_2 s_3 s_1, s_1 s_3 s_3 s_1, s_2 s_3 s_1 s_2, s_2 s_3 s_3 s_2, s_3 s_1 s_2 s_3, \\
 & \qquad s_3 s_1 s_3 s_3, s_3 s_2 s_3 s_3, s_3 s_3 s_1 s_3, s_3 s_3 s_2 s_3, s_3 s_3 s_3 s_3\} \\
 &= T(\Xi_3) \bigcup L(P_2, \Xi_1) \bigcup L(P_1,\Xi_2).
\end{align*}
Furthermore, $\Xi_n = \varnothing$ for $n \geq 4$.
\end{example}

For each real $n \times n$ matrix $A$, there is a recursive formula for the coefficients of the characteristic polynomial of $A$; more explicitly, $f(\lambda) = \det (A - \lambda I) = \sum_{i=0}^n b_i \lambda^{n-i}$, where
\begin{align*}
b_0 &= (-1)^n,  \quad b_1 = - (-1)^n A_1, \quad b_2 = -\dfrac{1}{2} (b_1 A_1 + (-1)^n A_2), \\
b_3 &= -\dfrac{1}{3} (b_2 A_1 + b_1 A_2 + (-1)^n A_3), \ldots \\
b_n &= -\dfrac{1}{n} (b_{n-1} A_1 + b_{n-2} A_2 + \cdots + b_1 A_{n-1} + (-1)^n A_n),
\end{align*}
and $A_i$ is the trace of $A^i$ for $1 \leq i \leq n$ (cf.~\cite[p.303-305]{ZD-1963}).

\begin{proof}[Proof of Theorem \ref{thm:char-poly-A-xi}]
Proposition \ref{prop:T-L-partition-Pn} shows that, for $n \in \mathbb{N}$,
$$
|P_n| = |T(\Xi_n)| + \sum_{i=1}^{n-1} |L(P_i, \Xi_{n-i})|;
$$
that is, $A_1 = \xi_1$ and $A_n = n \xi_n + \sum_{i=1}^{n-1} A_i \xi_{n-i}$ for $n \geq 2$. Since $\xi_n = 0$ for $n > d$, \eqref{eq:char-poly-A-xi} follows from
$$
\xi_n= \frac{1}{n}(A_n-\sum_{i=1}^{n-1}A_i\xi_{n-i}), \quad 1 \leq n \leq d,
$$
and the recursive formula of the coefficients of the characteristic polynomial of $A$.
\end{proof}

\section{Topological Degree of Shift Spaces on Monoids}\label{sec:degree-essential-case}

Suppose that $X$ is a $G$-shift space. Let $\Gamma_n^{[g]}(X)$ denote the set of $n$-blocks of $X$ rooted at $g$; that is, the support of each block of $\Gamma_n^{[g]}(X)$ is $g\Delta_n$. Let $\gamma_n^{[g]}$ denote the cardinality of $\Gamma_n^{[g]}(X)$. The \emph{topological degree} of $X$ is defined as
\begin{equation}\label{eq:defn-degree}
\mathrm{deg}(X) = \limsup_{n \to \infty} \dfrac{\ln \ln \gamma_n(X)}{n},
\end{equation}
where $\gamma_n(X) = \gamma_n^{[e]}(X)$. The rest of this paper omits the notation $X$ when it causes no confusion.

For each $a \in \mathcal{A}$, let $\Gamma_{a, n}^{[g]} \subseteq \Gamma_n^{[g]}$ consist of $n$-blocks rooted at $g$ and labeled $a$ at root. A symbol $a$ is \emph{essential}\footnote{In one-dimensional symbolic dynamical systems, a graph presentation of an SFT is called essential if there is no stranded vertex \cite{LM-1995}. In other words, every vertex has its contribution in the corresponding SFT. This paper extends the idea to the alphabet of $G$-SFTs.} if $\gamma_{a, n} = |\Gamma_{a, n}| \geq 2$ for some $n \in \mathbb{N}$; otherwise, $a$ is an \emph{inessential} symbol. Proposition \ref{prop:degree-exists-related-essential} indicates that the limit in \eqref{eq:defn-degree} exists provided $X$ is a $G$-SFT, and only the essential symbols matter for calculating the topological degree.

\begin{proposition}[See \cite{BC-2017a}]\label{prop:degree-exists-related-essential}
Suppose that $X$ is a $G$-SFT. Then the limit \eqref{eq:defn-degree} exists and
\begin{equation}
\mathrm{deg}(X) = \lim_{n \to \infty} \dfrac{\ln \sum_{i=1}^k \ln \gamma_{i, n}}{n} = \lim_{n \to \infty} \dfrac{\ln \sum_{i \in \mathcal{E}} \ln \gamma_{i, n}}{n},
\end{equation}
where $\mathcal{E} \subseteq \mathcal{A}$ denotes the set of essential symbols.
\end{proposition}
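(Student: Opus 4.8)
The plan is to reduce the three expressions in the statement to a single quantity, prove that this quantity converges, and then transfer the limit back. First I would split each block according to its root label: an $n$-block rooted at $e$ assigns exactly one symbol to $e$, so $\gamma_n = \sum_{a \in \mathcal{A}} \gamma_{a,n}$. Setting $M_n = \max_{a} \gamma_{a,n}$ gives the two elementary sandwiches $M_n \leq \gamma_n \leq k M_n$ and $\ln M_n \leq \sum_{a} \ln \gamma_{a,n} \leq k \ln M_n$, the latter because each of the $k$ summands lies in $[0, \ln M_n]$ (discarding any symbol that never occurs in $X$). Taking logarithms twice and using $\ln(1+x) \leq x$, both $\ln \ln \gamma_n$ and $\ln \sum_a \ln \gamma_{a,n}$ differ from $\ln \ln M_n$ by at most $\ln k + o(1)$; dividing by $n$ shows that the first two members of the asserted identity share the same $\limsup$ and $\liminf$ as $n^{-1}\ln \ln M_n$. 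For the third member, an inessential symbol $i$ satisfies $\gamma_{i,n} \leq 1$, hence $\gamma_{i,n}=1$ and $\ln \gamma_{i,n}=0$ for every $n$ (for occurring $i$); therefore $\sum_{i=1}^k \ln \gamma_{i,n} = \sum_{i \in \mathcal{E}} \ln \gamma_{i,n}$ exactly, and the second equality is immediate. If $\mathcal{E} = \varnothing$ the block counts are bounded and $\mathrm{deg}(X)=0$ trivially, so I may assume $\mathcal{E} \neq \varnothing$, which also guarantees $\ln M_n \to \infty$ and makes the double logarithms eventually defined and positive.

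It remains to prove that $n^{-1}\ln \ln \gamma_n$ converges, and for this I would establish an \emph{approximate subadditivity} $\ln \ln \gamma_{m+n} \leq \ln \ln \gamma_m + \ln \ln \gamma_n + C$ with $C$ independent of $m,n$, after which Fekete's subadditive lemma applied to $n \mapsto \ln \ln \gamma_n + C$ yields the limit. The key structural input is the finite representation of $G$: beyond the finite subgraph $F$ the Cayley graph splits into free subtrees, so every vertex $g$ with $|g|=m$ roots a copy of the depth-$n$ tree, and the finite-type (bounded-window) constraints force the labelings of these subtrees to interact with the labeling of $\Delta_m$ only through a collar of bounded width around the frontier. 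Consequently a block on $\Delta_{m+n}$ is determined by its restriction to $\Delta_m$ together with one $n$-block per frontier vertex compatible with the frontier labels, giving $\gamma_{m+n} \leq C_0\, \gamma_m\, M_n^{F(m)}$, where $F(m) = \#\{g : |g| = m\}$ and $C_0$ absorbs the collar. Taking logarithms, $\ln \gamma_{m+n} \leq \ln C_0 + \ln \gamma_m + F(m)\ln M_n$.

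The main obstacle is then to control $F(m)$ against $\ln \gamma_m$: both grow at the same exponential rate $\kappa^m$, where $\kappa$ is the branching number of the Cayley graph, i.e.\ the spectral radius of the matrix $A$ whose characteristic polynomial is computed in Theorem \ref{thm:char-poly-A-xi}, so for a suitable constant $F(m) \leq C_1 \ln \gamma_m$ for all large $m$. Combined with $\ln M_n \leq \ln \gamma_n$ this produces $\ln \gamma_{m+n} \leq C_2\, \ln \gamma_m\, \ln \gamma_n$, and one further logarithm gives the desired subadditivity. I expect the delicate points to be (i) making the frontier factorization honest in the nonabelian setting, where the finite part $F$ must first be quotiented out before the free subtree structure appears, and (ii) obtaining $F(m) \leq C_1 \ln \gamma_m$ uniformly, which is exactly where essentiality of at least one symbol is used to force $\ln \gamma_m$ to grow at least as fast as the frontier. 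Once the limit exists for $n^{-1}\ln\ln\gamma_n$, the sandwich from the first paragraph transfers it verbatim to the other two expressions, completing the proof.
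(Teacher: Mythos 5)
Your first paragraph is fine: sandwiching all three quantities against $\ln\ln M_n$ and observing that inessential (occurring) symbols contribute $\ln\gamma_{i,n}=0$ correctly reduces everything to the existence of $\lim_n \frac{1}{n}\ln\ln\gamma_n$. Even your frontier decomposition can be made honest: since the subtree hanging off a frontier vertex $g$ depends only on the last letter of $g$, and since every branch of the Cayley graph reaches a right free generator within depth $d$ (after which the subtree is a full copy of some $\Delta_{n-i}$), one gets $\max_a\gamma_{a,n}^{[g]}\le k^{c}M_n^{D}$ with $c,D$ depending only on $G$, hence $\ln\gamma_{m+n}\le \ln\gamma_m+C_1F(m)\ln M_n$. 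The fatal step is your (ii): the bound $F(m)\le C_1\ln\gamma_m$ is not a lemma one can hope to prove --- it is essentially \emph{equivalent} to $X$ having full degree $\ln\rho_A$, and it fails for every $G$-SFT whose degree is strictly smaller, which is exactly the class of examples this paper is about (Theorems \ref{thm:degree-general-case-2symbol} and \ref{thm:degree-general-case-ksymbol}, Corollary \ref{cor:iff-cond-for-full-degree}). Concretely, on the free monoid with two generators take the one-step SFT whose rules are: in direction $s_1$, symbol $1$ may be followed by $1$ or $2$ and symbol $2$ only by $2$; in direction $s_2$, both symbols may be followed only by $2$. Then symbol $1$ is essential, yet $\gamma_{1,n}=n+1$ and $\gamma_{2,n}=1$, so $\ln\gamma_m\sim\ln m$ while $F(m)=2^m$. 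Essentiality of one symbol only forces $\ln\gamma_m$ to grow like $e^{\deg(X)\,m}$, not like the frontier $\rho_A^m$; your assertion that ``both grow at the same exponential rate $\kappa^m$'' silently assumes the conclusion of Theorem \ref{thm:degree-essential-case}, which holds only when the essential symbols fill out the free directions. With the true bound $F(m)\sim\rho_A^m$, your inequality yields only $\deg(X)\le\ln\rho_A$ --- an upper estimate, not convergence --- so Fekete's lemma never gets off the ground, and the proposition remains unproven precisely for the sub-maximal-degree SFTs where it is most needed.

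For comparison: the paper offers no internal proof (it cites \cite{BC-2017a}), and the cited argument is the recurrence machinery rephrased in Section \ref{sec:degree-essential-case}. The finite representation of $G$ gives the system of nonlinear recurrence equations for $(\gamma_{a,n})_{a,n}$; taking logarithms turns each simple subsystem into a linear iteration $\theta_n=M\theta_{n-1}$ with only finitely many possible adjacency matrices $M$, and both $\limsup$ and $\liminf$ of $\frac{1}{n}\ln\ln\gamma_n$ are squeezed against $\max_M\ln\rho_M$ (Theorem \ref{thm:degree-algorithm}). That mechanism automatically tracks only the ``live'' part of the frontier --- the positions whose sub-block counts actually branch --- which is exactly the information your uniform bound $M_n^{F(m)}$ discards. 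A subadditivity proof would have to run the decomposition over essential frontier positions only, and identifying those positions is tantamount to redoing the simple-subsystem analysis; so the gap is structural, not a matter of tightening constants.
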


Theorem \ref{thm:degree-essential-case} reveals that, whenever every symbol is essential, the degree of $G$-shift of finite type ($G$-SFT) is the logarithm of the spectral radius of $A$ (recall that $G = \langle \Sigma | R_A \rangle$ is determined by a $d \times d$ matrix $A$, see Section \ref{sec:definition}).

\begin{theorem}\label{thm:degree-essential-case}
Suppose that $X$ is a $G$-SFT and every symbol is essential. Then $\mathrm{deg}(X) = \ln \rho_A$, where $\rho_A$ is the spectral radius of $A$.
\end{theorem}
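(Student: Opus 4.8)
The plan is to sandwich $\gamma_n$ between two quantities whose double logarithms both grow at rate $\ln\rho_A$. The backbone is a purely geometric fact about $G$ — the number of vertices of the Cayley graph at distance $n$ from $e$ grows at exponential rate $\rho_A$ — and the essentiality hypothesis is used only to transfer this rate from the geometry of $G$ to the block counts of $X$, so that Proposition~\ref{prop:degree-exists-related-essential} (with $\mathcal{E}=\mathcal{A}$) delivers the value of the limit.

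First I would record the geometric input. Write $W_n=\#\{g\in G:|g|=n\}$ and $V_n=|\Delta_n|=\sum_{m\le n}W_m$. A word $s_{i_1}\cdots s_{i_n}$ is reduced exactly when $A(i_j,i_{j+1})=1$ for every $j$, so distinct reduced words are distinct elements and $W_n=\mathbf{1}^{T}A^{\,n-1}\mathbf{1}$, with $\mathbf{1}$ the all-ones vector. By Theorem~\ref{thm:char-poly-A-xi} the characteristic polynomial of $A$ is $\lambda^{d}-\sum_{i=1}^{d}\xi_i\lambda^{d-i}$, so Cayley--Hamilton gives $A^{n}=\sum_{i=1}^{d}\xi_iA^{\,n-i}$ and hence $W_n=\sum_{i=1}^{d}\xi_iW_{n-i}$ for $n>d$. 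Thus $\{W_n\}$ satisfies a linear recurrence with nonnegative coefficients $\xi_i$ and characteristic polynomial $f$; since $\mathbf{1}$ is strictly positive its Perron component does not vanish, and Perron--Frobenius theory yields $\lim_{n\to\infty}\tfrac1n\ln W_n=\lim_{n\to\infty}\tfrac1n\ln V_n=\ln\rho_A$. The upper bound is then immediate: an $n$-block is a labeling of the $V_n$ vertices of $\Delta_n$ by $\mathcal{A}$, so $\gamma_n\le k^{V_n}$, whence $\ln\ln\gamma_n\le\ln V_n+\ln\ln k$ and $\deg(X)\le\ln\rho_A$ by \eqref{eq:defn-degree}.

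The substance is the matching lower bound, and here I would exploit essentiality together with the tree-like structure of the Cayley graph away from the finite subgraph $F$. Fix a window size $w$ on which $X$ is determined by forbidden blocks, and take $M\ge w$. Because every symbol is essential and $\gamma_{a,n}$ is nondecreasing in $n$, I would first secure, for each $a\in\mathcal{A}$, two height-$M$ blocks $\tau^{a,0}\ne\tau^{a,1}$ that \emph{agree on the width-$w$ collar} $\Delta_w$ about their root but differ deeper inside $\Delta_M$; the collar agreement is what makes a reconfiguration splice legally into a surrounding point, while the depth $M\ge w$ guarantees the altered subtree remains extendable below its leaves. I would then extract an antichain $\mathcal{G}_n\subseteq\Delta_n$ of vertices at distance $n-M$ whose $M$-balls are pairwise disjoint (and carry no forbidden block straddling two of them, which holds automatically in the forest beyond $F$), with $|\mathcal{G}_n|\ge c\,W_{n-M}$ for a constant $c>0$ depending only on $A,M,w$, since the frontier carries $\asymp\rho_A^{\,n}$ vertices and the separation thins them by a bounded factor. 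Fixing a reference point $t_0\in X$ (nonempty, as the symbols are essential) and choosing independently at each $g\in\mathcal{G}_n$ between the two collar-matched patterns at $a=(t_0)_g$ produces $2^{|\mathcal{G}_n|}$ distinct $n$-blocks; hence $\ln\gamma_n\ge c\,W_{n-M}\ln 2$, so $\ln\ln\gamma_n\ge\ln W_{n-M}+O(1)$ and $\deg(X)\ge\ln\rho_A$.

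The main obstacle is precisely this gluing step: I must guarantee that the two local reconfigurations at each site are \emph{simultaneously admissible} and yet yield \emph{genuinely distinct} restrictions to $\Delta_n$. Forcing the bifurcating blocks to agree on the width-$w$ collar neutralizes every finite-type constraint that crosses a site, and the observation that each frontier subtree attaches to the rest of the Cayley graph only through a bounded neighborhood of its root — a direct consequence of finite representation, i.e. $|V_F|<\infty$ — is what decouples distinct sites and makes the binary choices independent. Once the upper and lower bounds are in place, Proposition~\ref{prop:degree-exists-related-essential} certifies that the limit in \eqref{eq:defn-degree} exists and equals the common value $\ln\rho_A$.
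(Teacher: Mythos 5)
Your upper bound is fine, and your overall strategy (volume growth of $\Delta_n$ from above, independent subtree reconfiguration along an antichain from below) is genuinely different from the paper's, which never manipulates configurations at all: the paper derives the recurrence representation $\gamma_{i,n}=f_1f_2\cdots f_\ell$ with $\deg f_j=\xi_j$, invokes Theorem \ref{thm:degree-algorithm}, and checks via the explicit positive eigenvector $v=(\rho_A^{\ell-1},\ldots,\rho_A,1)'\otimes\mathbf{1}_k$ that \emph{every} simple subsystem's adjacency matrix has spectral radius exactly $\rho_A$. The problem is your lower bound. The step ``because every symbol is essential \ldots\ I would first secure, for each $a$, two height-$M$ blocks $\tau^{a,0}\neq\tau^{a,1}$ that agree on the width-$w$ collar but differ deeper'' is a genuine gap: essentiality only provides two distinct blocks with the same \emph{root label}, and nothing prevents every such pair from differing already inside the collar. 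Moreover, no argument can close this gap at the generality you are working in (arbitrary window $w$). If no symbol admits a collar-matched pair, then restriction to $\Delta_w$ is injective on each $\Gamma_{a,n}$, so every $\gamma_{a,n}$ is bounded and $\mathrm{deg}(X)=0$; and this really occurs with all symbols essential. Concretely, let $G$ be the free monoid on two generators (so $\rho_A=2$), $\mathcal{A}=\{0,1\}$, and let the allowed $2$-blocks be exactly those with root $x$, both children labeled $y$, and all four grandchildren labeled $x\oplus y$ (mod $2$). The points of this SFT are the four level-constant trees whose level sequences satisfy $x_{j+2}=x_j\oplus x_{j+1}$; each symbol roots two distinct points, so both symbols are essential with $\gamma_{a,n}=2$, yet $\gamma_n=4$ for all $n\geq 1$ and $\mathrm{deg}(X)=0\neq\ln 2$.

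What this shows is that your argument is sound precisely in the one-step case $w=1$, where the collar degenerates to the root alone: there, essentiality literally hands you the two required patterns, swapping the whole subtree below a vertex of fixed label preserves admissibility because the only crossing constraint is the (parent, vertex) pair, and you may take $\mathcal{G}_n$ to be all $W_{n-M}$ vertices at depth $n-M$, so no thinning or straddling issues arise. That one-step case --- ``$X$ determined by $k\times k$ binary matrices $A_1,\ldots,A_d$'' --- is also the only case the paper proves in detail; its claim that the general case is ``analogous'' runs into exactly the obstruction above. So either restrict your statement and proof to one-step $G$-SFTs, in which case your splicing argument is a correct and more self-contained alternative to the paper's recurrence-plus-Perron-eigenvector route, or you need a strictly stronger hypothesis than essentiality of symbols (essentiality of depth-$w$ collar patterns, i.e.\ of the recoded alphabet); as written, the collar-matched pair claim is unjustified and, by the example above, unfixable.
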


Ban and Chang \cite{BC-2017a} reveal an algorithm for computing the degree of $G$-SFTs, where $G$ is a Fibonacci set ($G = \langle \Sigma | R_A \rangle$ with $\Sigma = \{s_1, s_2\}$ and $A = \begin{pmatrix}
1 & 1 \\
1 & 0
\end{pmatrix}$). The algorithm extends to general $G$ via analogous argument. For the compactness and self-containment of the present paper, this section rephrases main ideas and propositions of the algorithm without detailed proofs, and Example \ref{eg:hom-shift-essential-case} shows how the algorithm works. For more details about the algorithm, see \cite{BC-2017a} and the references therein.

Since $G$ has a finite representation, each $G$-SFT relates to a \emph{recurrence representation} (or \emph{system of nonlinear recurrence equations}, SNRE) of the form
\begin{align*}
\gamma_{i, n} = \sum c_\mathbf{j} \gamma_{1,n-1}^{j_{1,1}} \cdots \gamma_{k, n-1}^{j_{k,1}} \gamma_{1,n-2}^{j_{1,2}} \cdots \gamma_{k, n-2}^{j_{k,2}} \cdots  \gamma_{1,n-l}^{j_{1,l}} \cdots \gamma_{k, n-l}^{j_{k,l}}
\end{align*}
for some $l \in \mathbb{N}$, where $c_\mathbf{j} \in \mathbb{N}$, $\mathbf{j} = (j_{1,1}, \ldots, j_{k,1}, \ldots, j_{1,l}, \ldots, j_{k,l})$, and $1 \leq i \leq k$. A \emph{simple subsystem} of $X$ is of the form
\begin{align*}
\gamma_{i, n} = \gamma_{1,n-1}^{j_{1,1}} \cdots \gamma_{k, n-1}^{j_{k,1}} \gamma_{1,n-2}^{j_{1,2}} \cdots \gamma_{k, n-2}^{j_{k,2}} \cdots  \gamma_{1,n-l}^{j_{1,l}} \cdots \gamma_{k, n-l}^{j_{k,l}}
\end{align*}
for some $j_{1,1}, \ldots, j_{k,1}, \ldots, j_{1,l}, \ldots, j_{k,l}$ and $1 \leq i \leq k$. Take logarithm on the above equation and let
$$
\theta_n = ( \ln \gamma_{1,n}, \ldots, \ln \gamma_{k,n}, \ln \gamma_{1;n-1}, \ldots, \ln \gamma_{k,n-1}, \ldots, \ln \gamma_{1, n-l+1}, \ldots, \ln \gamma_{k, n-l+1})',
$$
where $v'$ refers to the transpose of $v$. Then there exists a $kl \times kl$ matrix $M$ called \emph{adjacency matrix} (of the simple subsystem) such that $\theta_n = M \theta_{n-1}$ for $n \geq l+1$. Theorem \ref{thm:degree-algorithm} reveals that the degree of $X$ relates to the maximal spectral radius among the adjacency matrices of simple subsystems of $X$.

\begin{theorem}[See \cite{BC-2017a}]\label{thm:degree-algorithm}
Suppose that $X$ is a $G$-SFT. Then
$$
\mathrm{deg}(X) = \max\{\ln \rho_M: M \text{ is the adjacency matrix of a simple subsystem of } X\}.
$$
\end{theorem}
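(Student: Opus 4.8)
The plan is to pass to logarithmic variables and recognise the system of nonlinear recurrence equations (SNRE) as a monotone, positively homogeneous (tropical) recurrence whose growth rate is pinched between the growth rates of the individual simple subsystems. Write $L_{i,n} = \ln \gamma_{i,n}$ and recall from Proposition \ref{prop:degree-exists-related-essential} that
$$
\mathrm{deg}(X) = \lim_{n\to\infty} \frac{\ln \sum_{i\in\mathcal{E}} L_{i,n}}{n},
$$
so it suffices to control the double-exponential rate of the $L_{i,n}$ for essential symbols (inessential ones contribute only $O(1)$ and may be discarded). Taking logarithms in the SNRE and using that the number of monomials and the coefficients $c_\mathbf{j}$ are bounded independently of $n$, while each $L_{p,n-q}\ge 0$, I would establish the two-sided estimate
$$
L_{i,n} = \max_{\mathbf{j}} \sum_{p,q} j_{p,q}\, L_{p,n-q} + O(1),
$$
the $O(1)$ absorbing $\ln(\#\text{terms})$ and $\ln c_\mathbf{j}$. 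A fixed choice of a maximising monomial for every $i$ is exactly a simple subsystem, with adjacency matrix $M$ and $\theta_n = M\theta_{n-1}$.

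For the lower bound I would restrict to a single simple subsystem $S$. Since each monomial is one positive summand of the full SNRE and $c_\mathbf{j}\ge 1$, we have $\gamma_{i,n} \ge \prod_{p,q}\gamma_{p,n-q}^{j_{p,q}}$ for the monomial defining $S$; starting from common initial data and inducting on $n$ gives $L_{i,n} \ge L_{i,n}^{S}$, where $\theta_n^{S} = M\theta_{n-1}^{S}$. The companion matrix $M$ is nonnegative, and because every symbol is essential the relevant initial vector is coordinatewise bounded below by a positive constant for all large $n$; hence $\|\theta_n^{S}\|^{1/n}\to\rho_M$ and $L_{i_0,n}^{S} \asymp \rho_M^{\,n}$ for some coordinate $i_0$. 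Substituting into the displayed formula for $\mathrm{deg}(X)$ yields $\mathrm{deg}(X)\ge \ln\rho_M$, and optimising over $S$ gives $\mathrm{deg}(X)\ge \max_M \ln\rho_M =: \ln\rho^{*}$.

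The reverse inequality is the crux. The max-plus recurrence selects, at each stage $n$ and each $i$, an optimal monomial, producing a time-varying sequence of simple-subsystem matrices $M^{(n)}$; a priori the growth of such a switched product is governed by a joint spectral radius that may strictly exceed $\max_M\rho_M$. The work is to show this does not occur here, because the trajectory has a single dominant direction and the arg-max stabilises. Concretely, I would posit the Perron ansatz $L_{p,n}\sim c_p\rho^{\,n}$ with $\rho = e^{\mathrm{deg}(X)}$ and $c_p>0$ for essential $p$, and substitute it into the recurrence to obtain
$$
c_i = \max_{\mathbf{j}} \sum_{p,q} j_{p,q}\, c_p\, \rho^{-q}.
$$
The monomial attaining this maximum for each $i$ defines one fixed simple subsystem $M^{*}$, and lifting $(c_p)$ to the companion vector with blocks $c_p\rho^{-(q-1)}$ shows that this vector is a strictly positive $\rho$-eigenvector of $M^{*}$. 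By Perron--Frobenius $\rho = \rho_{M^{*}} \le \rho^{*}$, whence $\mathrm{deg}(X) = \ln\rho \le \ln\rho^{*}$, matching the lower bound.

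I expect the main obstacle to be legitimising the Perron ansatz: proving that $c_p=\lim_n \rho^{-n}L_{p,n}$ exists and is strictly positive for essential symbols, rather than merely bounding a $\limsup$. The natural tool is nonlinear Perron--Frobenius theory for the map $T(\theta)_i=\max_{\mathbf{j}}\sum_{p,q}j_{p,q}\theta_{p,q}$, which is monotone and positively homogeneous of degree one; its Collatz--Wielandt eigenvalue $\rho(T)$ is well defined, and irreducibility among the essential symbols, which follows from the strongly connected structure of the Cayley graph of $G$ once a pseudo-identity is adjoined, yields a positive eigenvector and convergence of normalised trajectories. Handling the $O(1)$ perturbation by a squeezing argument and transferring the eigenvector through the companion lift then closes the identification $\mathrm{deg}(X)=\ln\rho^{*}$.
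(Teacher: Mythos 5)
A preliminary remark on the comparison itself: the paper contains no proof of Theorem \ref{thm:degree-algorithm} --- it is imported from \cite{BC-2017a}, and Section \ref{sec:degree-essential-case} explicitly rephrases the main ideas ``without detailed proofs'' --- so your attempt can only be judged on its own merits. Much of it is sound. The two-sided estimate $L_{i,n}=\max_{\mathbf{j}}\sum_{p,q}j_{p,q}L_{p,n-q}+O(1)$ is correct because the coefficients $c_{\mathbf{j}}\geq 1$ and the number of monomials is bounded independently of $n$; the lower bound $\mathrm{deg}(X)\geq\ln\rho_M$ for each fixed simple subsystem follows from your monotone induction together with $L_{p,n}\geq\ln 2$ for essential $p$ and $n$ large (so that $M^{n}\theta_{0}\geq c\,M^{n}\mathbf{1}$ grows at rate exactly $\rho_M$); and your discarding of inessential symbols is precisely the correction the paper itself records in the remark closing Section \ref{sec:degree-essential-case} --- without it the literal statement fails, since the row $\gamma_{2,n}=\gamma_{2,n-1}^{\xi_1}\cdots\gamma_{2,n-d}^{\xi_d}$ attached to an inessential symbol can force $\rho_M=\rho_A$ while the degree is smaller.

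The genuine gap is exactly where you located it: the upper bound. You reduce everything to the Perron ansatz, i.e.\ to $c_p=\lim_{n}\rho^{-n}L_{p,n}$ existing and being strictly positive, and propose to legitimise it by nonlinear Perron--Frobenius theory, asserting that irreducibility of the map $T$ ``follows from the strongly connected structure of the Cayley graph of $G$.'' This conflates two different graphs. The irreducibility that nonlinear Perron--Frobenius theory needs is that of the \emph{dependency digraph on the essential alphabet} (an edge $i\to p$ when $\gamma_{p,\cdot}$ occurs in some monomial of the equation for $\gamma_{i,n}$), and that digraph can fail to be strongly connected no matter how connected the Cayley graph is: for $k=3$, take a hom-shift-type rule in which the symbols $1,2$ may only sit above $\{1,2\}$ while $3$ may sit above $\{1,3\}$; all three symbols are essential, yet $3$ is unreachable from $\{1,2\}$ in the dependency digraph. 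For such reducible monotone homogeneous maps one gets neither a positive eigenvector nor convergence of normalised orbits, the arg-max need not stabilise, and your chain $\rho=\rho_{M^{*}}\leq\rho^{*}$ collapses. The gap is repairable without the ansatz: the adjacency matrices of simple subsystems form a family of nonnegative matrices with \emph{independent row uncertainty} (each coordinate selects its monomial separately, while the companion rows are common to all members), and for such families the joint spectral radius equals $\rho^{*}:=\max_{M}\rho_M$ --- a theorem of Blondel and Nesterov, provable alternatively by policy iteration or by induction on the communicating classes of the dependency digraph. Feeding this into your switched bound $\theta_n\leq M^{(n)}\theta_{n-1}+C\mathbf{1}$ (coordinatewise) yields $\ln\|\theta_n\|\leq n\ln(\rho^{*}+\varepsilon)+O(\ln n)$, hence $\mathrm{deg}(X)\leq\ln\rho^{*}$, which closes the argument. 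It is worth noting that in the all-essential setting of Theorem \ref{thm:degree-essential-case} this issue is invisible, because every member of the family shares the eigenvector $v=(\rho_A^{\ell-1},\ldots,\rho_A,1)'\otimes\mathbf{1}_k$ and all the $\rho_M$ coincide with $\rho_A$; it is only in the spectrum setting, where the row sums may drop below $\xi_i$, that the switched product genuinely has to be controlled.
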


\begin{example}\label{eg:hom-shift-essential-case}
Suppose that $X$ is a hom-shift on $G$ determined by a $k \times k$ binary matrix $T$; that is, for each labeled tree $t \in X$ and $g \in G$, a pattern $(t_g, t_{g s_i})$ is allowable if and only if $T(t_g, t_{g s_i}) = 1$. For instance, consider the case where $k = 2$ and
$T = \begin{pmatrix}
1 & 1 \\
1 & 1
\end{pmatrix}$. A hom-shift defined by $T$ is a full $G$-shift and $\mathrm{deg}(X) = \ln \rho_A$. This example shows that the above algorithm derives the desired result.

It follows from $s_3$ being a free generator that, for $i = 1, 2$, $\gamma_{i, n}^{[g]} = \gamma_{i, n}$ if $g = g' s_3$ for some $g, g' \in G$. Hence, for $i=1, 2$,
\begin{align*}
\gamma_{i,n} &= (\gamma_{1,n-1}^{[s_1]}+\gamma_{2,n-1}^{[s_1]})(\gamma_{1,n-1}^{[s_2]}+\gamma_{2,n-1}^{[s_2]})(\gamma_{1,n-1}^{[s_3]}+\gamma_{2,n-1}^{[s_3]}) \\
 &= (\gamma_{1,n-1}^{[s_1]}+\gamma_{2,n-1}^{[s_1]})(\gamma_{1,n-1}^{[s_2]}+\gamma_{2,n-1}^{[s_2]})(\gamma_{1,n-1}+\gamma_{2,n-1})
\end{align*}
Combining
\begin{align*}
\gamma_{i,n-1}^{[s_1]} &= (\gamma_{1,n-2}^{[s_1 s_2]}+\gamma_{2,n-2}^{[s_1 s_2]})(\gamma_{1,n-2}^{[s_1 s_3]}+\gamma_{2,n-2}^{[s_1 s_3]}) = (\gamma_{1,n-2}^{[s_1 s_2]}+\gamma_{2,n-2}^{[s_1 s_2]})(\gamma_{1,n-2} + \gamma_{2,n-2}) \\
\gamma_{i,n-1}^{[s_2]} &= \gamma_{1,n-2}^{[s_2 s_3]}+\gamma_{2,n-2}^{[s_2 s_3]} = \gamma_{1,n-2} + \gamma_{2,n-2}
\end{align*}%
with
\begin{align*}
\gamma_{i,n-1}^{[s_1 s_2]} = \gamma_{1,n-3}^{[s_1 s_2 s_3]} + \gamma_{2,n-3}^{[s_1 s_2 s_3]} = \gamma_{1,n-3} + \gamma_{2,n-3}
\end{align*}
derives that
\begin{align*}
\gamma_{i,n} &=
(4\gamma_{1,n-3}\gamma_{1,n-2}+4\gamma_{2,n-3}\gamma_{1,n-2}+4\gamma_{1,n-3}\gamma_{2,n-2}+4\gamma_{2,n-3}\gamma_{2,n-2}) \cdot \\
 & \qquad (2\gamma_{1,n-2}+2\gamma_{2,n-2}) (\gamma_{1,n-1}+\gamma_{2,n-1})
\end{align*}
Let
$$
\theta_n =  (\ln \gamma_{1,n},\ln \gamma_{2,n},\ln \gamma_{1,n-1},\ln \gamma_{2,n-1},\ln \gamma_{1,n-2},\ln \gamma_{2,n-2})'.
$$
For every simple subsystem of $X$, the corresponding adjacency matrix is of the form
\begin{equation*}
M=
\begin{pmatrix}
B_{1}&B_{2}&B_{3}\\ 
I&0&0\\ 
0&I&0%
\end{pmatrix},
\end{equation*}
where $B_l$ is a $2 \times 2$ matrix satisfies $\sum_{q = 1}^2 B_l(p, q) = \xi_l$ for all $l = 1,2,3$, $p = 1, 2$. That is, $\theta_n = M \theta_{n-1}$ for $n \geq 3$. Let
$$
v = (\rho_A^2, \rho_A^2, \rho_A, \rho_A, 1, 1)'.
$$
Observe that $Mv = \rho_A v$. Perron-Frobenius Theorem demonstrates that $\rho_A$ is also the spectral radius of $M$. In other words, $\mathrm{deg}(X) = \ln \rho_A$.
\end{example}

\begin{proof}[Proof of Theorem \ref{thm:degree-essential-case}]
The proof focuses on the case where $X$ is a $G$-SFT determined by $k \times k$ binary matrices $A_1, \ldots, A_d$ for clarification, the demonstration of the general case is analogous. In this case, for each labeled tree $t \in X$ and $g \in G$, $(t_g, t_{g s_l})$ is allowable if and only if $A_l (t_g, t_{g s_l}) = 1$ for $1 \leq l \leq d$.

Write $A_l = (a_{l; i_1, i_2})$ for $1 \leq l \leq d, 1 \leq i_1, i_2 \leq k$. Since $\gamma_{j, n}^{[g s_l]} = \gamma_{j, n}$ for all $1 \leq j \leq k, n \in \mathbb{N}, g \in G$ provided $s_l$ is a free generator, for $1 \leq i \leq k$,
\begin{align*}
\gamma_{i,n} &= \prod_{s_l \in \Sigma} (\sum_{j_1=1}^k a_{s_l;i,j_1}\gamma_{{j_1},n-1}^{[s_l]}) \\
 &= \prod_{s_l \notin S_R} (\sum_{j_1=1}^k a_{s_l;i,j_1}\gamma_{{j_1},n-1}^{[s_l]})\prod_{s_l\in S_R} (\sum_{j_1=1}^k a_{s_l;i,j_1} \gamma_{{j_1},n-1}^{[s_l]}) \\
 &= \prod_{s_l \notin S_R} (\sum_{j_1=1}^k a_{s_l;i,j_1}\gamma_{{j_1},n-1}^{[s_l]})\prod_{s_l\in S_R} (\sum_{j_1=1}^k a_{s_l;i,j_1} \gamma_{{j_1},n-1})
\end{align*}
Observe that $f_1 = \prod_{s_l\in S_R} (\sum_{j_1=1}^k a_{s_l;i,j_1} \gamma_{{j_1},n-1})$ is a polynomial of degree $\xi_1$ over $\gamma_{1,n-1}, \ldots,\gamma_{k,n-1}$.

Similarly, for each $s_l$ which is not a free generator,
\begin{align*}
\gamma_{j_1,n-1}^{[s_l]} = \prod_{s_l s_m \in G} (\sum_{j_2=1}^k a_{s_m; j_1, j_2} \gamma_{{j_2},n-2}^{[s_l s_m]})
\end{align*}%
infers that
\begin{align*}
\gamma_{i,n} = f_1 \cdot \prod_{s_l s_m \in G, s_l \notin S_R} (\sum_{j_1,j_2=1}^k a_{s_l;i,j_1} a_{s_m;j_1,j_2}\gamma_{j_2,n-2}^{[s_l s_m]}).
\end{align*}
Let
\begin{align*}
f_2 &= \prod_{s_l s_m \in G, s_l \notin S_R, s_m \in S_R} (\sum_{j_1,j_2=1}^k a_{s_l;i,j_1} a_{s_m;j_1,j_2}\gamma_{j_2,n-2}^{[s_l s_m]}) \\
 &= \prod_{s_l s_m \in G, s_l \notin S_R, s_m \in S_R} (\sum_{j_1,j_2=1}^k a_{s_l;i,j_1} a_{s_m;j_1,j_2}\gamma_{j_2,n-2})
\end{align*}
Then $f_2$ is a polynomial of degree $\xi_2$. Repeating the same process decompose $\gamma_{i,n} = f_1 f_2 \cdots f_{\ell}$, where $\ell = \max\{j: \xi_j \neq 0\} \leq d$, and $f_j$ is a polynomial of degree $\xi_j$ over $\gamma_{1,n-j}, \ldots, \gamma_{k,n-j}$ for $1 \leq j \leq \ell$.

Let
$$
\theta_n= (\ln \gamma_{1,n},\cdots,\ln \gamma_{k,n},\ln \gamma_{1,n-1},\cdots,\ln \gamma_{k,n-1},\cdots,\ln \gamma_{1,n-d+1},\cdots,\ln \gamma_{k,n-d+1})'.
$$
For each simple subsystem of $X$, there exists
\begin{equation*}
M=
\begin{pmatrix}
B_1& B_2& B_3& \cdots & B_{\ell} \\ 
I& 0 & \cdots & \cdots &0 \\ 
0&I & 0 &  \cdots  & 0\\ 
\vdots && \ddots& & \vdots  \\ 
0 & \cdots  & 0 &I&0%
\end{pmatrix},
\end{equation*}
where $B_j$ is a $k \times k$ nonnegative integral matrix satisfies $\sum_{q=1}^k B_j (p, q) = \xi_i$ for all $1 \leq j \leq \ell, 1 \leq p \leq k$, such that $M$ is the corresponding adjacency matrix (note that $\xi_j = 0$ for $j > \ell$). That is, $\theta_n = M \theta_{n-1}$ is the designated simple subsystem. Let $v = (\rho_A^{\ell-1} \cdots \rho_A 1)' \otimes \mathbf{1}_k$, where $\otimes$ is the Kronecker product and $\mathbf{1}_k \in \mathbb{R}^k$ is the vector consisting of $1$'s. It comes immediately that $M v = \rho_A v$. Perron-Frobenius Theorem infers that $\rho_A$ is also the spectral radius of $M$. Hence, $\mathrm{deg}(X) = \ln\rho_A$.

This completes the proof.
\end{proof}

\begin{remark}
For the general cases, Proposition \ref{prop:degree-exists-related-essential} demonstrates that Theorem \ref{thm:degree-algorithm} holds if the rows and columns of matrix $M$ indexed by inessential symbols are eliminated.
\end{remark}

\section{Degree Spectrum of $G$-SFTs}\label{sec:degree-spectrum}

Theorem \ref{thm:degree-essential-case} reveals that the degree of $G$-SFTs is $\ln \rho_A$ whenever every symbol is essential. This section extends to the general case and gives the complete characterization of degree spectrum of $G$-SFTs.

Let $\mathbb{Z}_+$ be the set of nonnegative integers. For $\mathbf{m}, \mathbf{n} \in \mathbb{Z}_+^d$, define $\mathbf{m} \preceq \mathbf{n}$ if $m_i \leq n_i$ for $1 \leq i \leq d$, and $\mathbf{m} \prec \mathbf{n}$ if $\mathbf{m} \preceq \mathbf{n}$ and $\mathbf{m} \neq \mathbf{n}$. Theorem \ref{thm:degree-general-case-2symbol} characterizes the degree spectrum of $G$-SFTs for the case where $k = 2$.

\begin{theorem}\label{thm:degree-general-case-2symbol}
Suppose that $k = 2$. Let $\xi = (\xi_1, \ldots, \xi_d)$. The degree spectrum of $G$-SFTs is
\begin{align*}
H = \{\ln \lambda: \lambda = \max \{x: x^d - \sum_{i=1}^d \alpha_i x^{d-i} = 0\} \text{ for some } \alpha \in \mathbb{Z}_+^d, \alpha \preceq \xi \}.
\end{align*}
\end{theorem}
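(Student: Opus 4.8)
The plan is to prove the two inclusions $H\subseteq\mathcal{D}$ and $\mathcal{D}\subseteq H$ separately, where $\mathcal{D}$ denotes the degree spectrum, and to write $\lambda_\alpha$ for the largest root of $x^d-\sum_i\alpha_i x^{d-i}$. Throughout I would invoke Theorem~\ref{thm:degree-algorithm} to replace $\deg(X)$ by $\max\{\ln\rho_M\}$ over the adjacency matrices $M$ of simple subsystems, and Proposition~\ref{prop:degree-exists-related-essential} (together with the Remark following Theorem~\ref{thm:degree-essential-case}) to delete the rows and columns indexed by inessential symbols at the outset. Since $G$ has finite representation, each such $M$ is the $2d\times 2d$ block-companion matrix with $2\times2$ nonnegative integer blocks $B_1,\dots,B_d$ of the proof of Theorem~\ref{thm:degree-essential-case}. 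The observation to isolate first is that, at every present level-$j$ branch, a simple subsystem contributes exactly one factor $\gamma_{\cdot,n-j}$; hence a row of $B_j$ sums to $\xi_j$ \emph{unless} a branch is forced onto an inessential symbol, and this deletion of columns is precisely the mechanism producing $\alpha\prec\xi$.

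For $\mathcal{D}\subseteq H$ I would split on the number of essential symbols. If both symbols are essential, then every present branch points to an essential symbol, so each $B_j$ has constant row sum $\xi_j$; the vector $v=(\rho_A^{d-1},\dots,1)'\otimes\mathbf{1}_2$ is a common Perron eigenvector exactly as in Theorem~\ref{thm:degree-essential-case}, giving $\rho_M=\rho_A$ for \emph{every} simple subsystem, whence $\deg(X)=\ln\rho_A=\ln\lambda_\xi\in H$. If exactly one symbol, say symbol $1$, is essential, then the factors carrying $\gamma_{2,n-j}\equiv1$ drop out of the logarithmic system, which collapses to the scalar companion recurrence $\ln\gamma_{1,n}=\sum_j\alpha_j\ln\gamma_{1,n-j}$ in which $\alpha_j$ counts the level-$j$ branches assignable to symbol $1$; maximising over simple subsystems selects the largest admissible integer $\alpha_j\le\xi_j$, so $\deg(X)=\ln\lambda_\alpha\in H$. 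The case of no essential symbol is the degenerate bottom $\alpha=0$. The gain here, specific to $k=2$, is that the essential subsystem is forced to be either the full constant-row-sum system or a genuinely scalar one, so no coupled block-companion determinant of degree $2d$ can intervene.

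For $H\subseteq\mathcal{D}$ I would, given $\alpha\preceq\xi$, construct a two-symbol $G$-SFT $X_\alpha$ realising $\lambda_\alpha$. The idea is to render symbol $2$ inessential by forbidding every generator-transition out of $2$ except $2\mapsto2$, so that the unique block rooted at $2$ is constant and $\gamma_{2,n}\equiv1$; and to permit symbol $1$ to follow itself at exactly $\alpha_j$ of the $\xi_j$ level-$j$ branches while forcing the remaining $\xi_j-\alpha_j$ branches onto symbol $2$. By the forward analysis the effective system is the scalar companion recurrence with coefficients $\alpha_j$, whose adjacency matrix has spectral radius $\lambda_\alpha$; since only $\alpha_j$ level-$j$ branches are available to symbol $1$ by construction, monotonicity of the spectral radius under entrywise domination shows that no simple subsystem can exceed $\lambda_\alpha$, whence $\deg(X_\alpha)=\ln\lambda_\alpha$.

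The hard part will be the book-keeping in this construction. The level-$j$ branches are nested, so forcing one branch onto symbol $2$ also deletes its entire subtree of descendant branches, and one therefore cannot prescribe the $\alpha_j$ free branches level-by-level independently. I would resolve this by using the finite representation $F$ to index branches by their generating words, counted by $\xi_j$ through Theorem~\ref{thm:char-poly-A-xi}, and by imposing finite-window forbidden blocks keyed to these words (rather than to single generators) that switch off precisely $\xi_j-\alpha_j$ of them at each level while keeping the retained branches mutually compatible. Establishing that such a compatible selection exists for \emph{every} $\alpha\preceq\xi$, and that the resulting object is a genuine $G$-SFT in which symbol $2$ is truly inessential, is the technical heart of the argument; the spectral computations on both sides are then routine consequences of Theorems~\ref{thm:degree-algorithm} and~\ref{thm:degree-essential-case}.
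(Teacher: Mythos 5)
Your proposal follows the paper's proof in its overall architecture. For the inclusion of the degree spectrum into $H$, you make the same case split on the number of essential symbols, use Theorem~\ref{thm:degree-algorithm} together with the block-companion form of the adjacency matrices from the proof of Theorem~\ref{thm:degree-essential-case}, and collapse the inessential symbol to get the companion matrix with top row $\eta\preceq\xi$; this is exactly the paper's argument. For the reverse inclusion, your construction --- symbol $2$ absorbing (hence inessential) and symbol $1$ allowed to follow itself at exactly $\alpha_j$ of the $\xi_j$ level-$j$ branches --- is precisely the paper's construction via the sets $S_n$, $\overline{S}_n$ and the blocks $B_i$.

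The one step you leave open, the ``technical heart'' of making the level-by-level selection compatible, rests on a false premise, and once this is seen the difficulty evaporates. You worry that the level-$j$ branches are nested, so that switching one off deletes descendant branches and the $\alpha_j$ cannot be prescribed independently. They are not nested: a level-$j$ branch is by definition a word $g_1\cdots g_j$ with $g_1,\ldots,g_{j-1}\notin S_R$ and $g_j\in S_R$, and no such word can be a proper prefix of another, say of $h_1\cdots h_{j'}$ with $j<j'$, since that would force $h_j=g_j\in S_R$ while the definition requires $h_i\notin S_R$ for all $i\leq j'-1$. Hence within one window (the subtree between consecutive first occurrences of free generators) the $\sum_j\xi_j$ branches form an antichain, selections at different levels are independent, the window rooted at a retained (label-$1$) branch repeats the same rule, and the subtree under a discarded branch is constantly $2$. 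This is exactly what the paper's blocks $B_i$, supported on $S_i\cup\overline{S}_i$, encode; verifying that the resulting one-step SFT has simple subsystems
\begin{equation*}
\gamma_{1,n}=c\,\gamma_{1,n-1}^{\alpha_1}\gamma_{2,n-1}^{\beta_1}\cdots\gamma_{1,n-d}^{\alpha_d}\gamma_{2,n-d}^{\beta_d},\qquad \gamma_{2,n}=\gamma_{2,n-1}^{d},
\end{equation*}
and hence degree $\ln\lambda_\alpha$, takes a few lines rather than a compatibility argument. One further small repair: your phrase ``maximising over simple subsystems selects the largest admissible integer $\alpha_j\le\xi_j$'' should be weakened, since the family of exponent vectors $\eta$ realized by the simple subsystems of a given $X$ need not contain a componentwise maximum; what the inclusion actually needs, and what holds, is that the degree is the maximum of $\ln\lambda_\eta$ over the finitely many realized $\eta\preceq\xi$, and any such maximum already lies in $H$.
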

\begin{proof}
Obviously, two inessential symbols infers that the degree is $0$; Theorem \ref{thm:degree-essential-case} indicates the degree is $\ln \rho_A$ and $\rho_A = \max \{x: x^d - \sum_{i=1}^d \beta_i x^{d-i} = 0\}$ if every symbol is essential. It suffices to consider the case where $1 \in \mathcal{A}$ is essential and $2 \in \mathcal{A}$ is inessential.

Without loss of generality, assume that $\xi_i > 0$ for $1 \leq i \leq d$. Similar to the discussion in Example \ref{eg:hom-shift-essential-case}, write $\gamma_{1,n} = f_1 f_2 \cdots f_d$, where
$$
f_1 = \prod_{u_1 \in S_R} (\sum_{j_1=1}^2 a_{u_1;1,j_1} \gamma_{j_1,n-1})
$$
and
$$
f_i = \prod_{u_1\cdots u_i \in G, u_1, \ldots, u_{i-1} \notin S_R, u_i \in S_R} (\sum_{j_1, \ldots, j_i =1}^2 a_{u_1;1,j_1} a_{u_2;j_1,j_2}\cdots a_{u_i; j_{i-1}, j_i} \gamma_{{j_i}, n-i})
$$
for $2 \leq i \leq d$, and $f_i$ is a polynomial of degree $\xi_i$. Hence, every simple subsystem of $X$ is of the form 
\begin{align*}
\gamma_{1,n} &= \gamma_{1,n-1}^{\eta_1}\gamma_{2,n-1}^{\tau_1}\gamma_{1,n-2}^{\eta_2}\gamma_{2,n-2}^{\tau_2}\cdots\gamma_{1,n-d}^{\eta_d}\gamma_{2,n-d}^{\tau_d}, \\
\gamma_{2,n} &= \gamma_{2,n-1}^{\xi_1} \gamma_{2,n-2}^{\xi_2} \cdots \gamma_{2,n-d}^{\xi_d},
\end{align*}%
where $\eta_i+\tau_i = \xi_i$ for $1 \leq i \leq d$.

Let $\theta_n= (\ln \gamma_{1,n},\ln \gamma_{2,n},\ln \gamma_{1,n-1},\ln \gamma_{2,n-1},\cdots,\ln \gamma_{1,n-d+1},\ln \gamma_{2,n-d+1})'$, and let
\begin{equation*}
M=
\begin{pmatrix}
\eta_1& \tau_1& \eta_2   & \tau_2   &\cdots & \eta_d   & \tau_d \\ 
0  & \xi_1  &   0   &   \xi_2   &\cdots &0   &\xi_d \\ 
1  &    &       &       &       & 0     &0 \\ 
   & 1  &       &       &       &\vdots & \vdots\\
   &    &\ddots &       &       &\vdots &\vdots\\
   &    &       & \ddots&       &\vdots &\vdots \\ 
   &    &       &       &   1   &0      &0%
\end{pmatrix}.
\end{equation*}
Then the simple subsystem is $\theta_n = M \theta_{n-1}$. Since $2$ is inessential, the degree of such a simple subsystem is $\ln \lambda$, where $\lambda$ is the spectral radius of
\begin{equation*}
\overline{M} =
\begin{pmatrix}
\eta_1& \eta_2  &\cdots& \eta_d   \\ 
1  &      &      &0 \\ 
   &\ddots&      &\vdots\\
   &      &   1  &0%
\end{pmatrix}.
\end{equation*}
A straightforward examination elaborates that $\lambda = \max \{x: x^d - \sum_{i=1}^d \eta_i x^{d-i} = 0\}$. This derives
$$
H \subseteq \{\ln \lambda: \lambda = \max \{x: x^d - \sum_{i=1}^d \alpha_i x^{d-i} = 0\} \text{ for some } \alpha \in \mathbb{Z}_+^d, \alpha \preceq \xi \}.
$$

To show that, for each $\alpha \in \mathbb{Z}_+^d$ satisfying $\alpha \preceq \xi$, there exists a $G$-SFT such that $\mathrm{deg}{X} = \ln \lambda$ with $\lambda = \max \{x: x^d - \sum_{i=1}^d \alpha_i x^{d-i} = 0\}$, construct a one-step $G$-SFT as follows. Without loss of generality, assume that $\xi_i > 0$ for $i \leq d$. The symbol $2$ is inessential in the following construction, thus it suffices to mention where to label $1$.

For $n \in \mathbb{N}$, let $S_1 = \{e\}$ and, for $n \geq 2$, let
$$
S_n = \{g = g_1 \cdots g_{n-1}: g s \in G \text{ for some } s \in S_R, g_i \notin S_R \text{ for } 1 \leq i \leq n-1\}.
$$
Observe that $S_n = \varnothing$ if and only if $n > d$ (under the assumption that $\xi_n = 0$ if and only if $n > d$). Let
$$
\overline{S}_n = \{g s: g \in S_n, s \in \Sigma, g s \in G\}.
$$
Then $\bigcup_{i=1}^d \overline{S}_n$ is the set of supports of two-blocks of $X$ up to shift. For $n = 1$, let $B_1 \subseteq \overline{S}_1^{\mathcal{A}}$ consists of $1$-blocks $\phi$ which satisfy $\phi_g = 1$ if and only if
$$
g \in S_1 \bigcup \Sigma \setminus S_R \quad \text{and} \quad |\{g \in S_R: \phi_g = 1\}| = \alpha_1.
$$
In other words, each pattern of $B_1$ labels $1$ at, except from the root and non-free generators, arbitrary $\alpha_1$ free generators. This makes $\max\{p: \gamma_{1,n-1}^p | \gamma_{1, n}\} = \alpha_1$.

Analogously, let $B_2 \subseteq \overline{S}_2^{\mathcal{A}}$ consists of $1$-blocks $\phi$ which satisfy $\phi_g = 1$ if and only if
$$
g \in S_2 \bigcup \{g' s \in \overline{S}_2: s \notin S_R\} \quad \text{and} \quad |\{g' s \in \overline{S}_2: s \in S_R, \phi_{g's} = 1\}| = \alpha_2.
$$
Then $\max\{p: \gamma_{1,n-2}^p | \gamma_{1, n}\} = \alpha_2$. Repeating the same process to construct $B_i$ for $i \leq d$ makes
$$
\max\{p: \gamma_{1,n-i}^p | \gamma_{1, n}\} = \alpha_i \quad \text{for} \quad 1 \leq i \leq d.
$$
For each subset $H \subseteq G$ such that $H$ forms the support of a one-block, observe that there exists $g \in G$ ended in free generator and $1 \leq i \leq d$ such that $H = g H'$ for some $H' \subseteq \overline{S}_i$. Then each labeled pattern of support $H$ follows the same rule as determined in $\overline{S}_i$. Notably, Such a pattern is still in $B_i$.

Therefore, every simple subsystem of $X$ generated by $B = \bigcup_{i=1}^d B_i$ is of the form
$$
\gamma_{1, n} = c \cdot \gamma_{1, n-1}^{\alpha_1} \gamma_{2, n-1}^{\beta_1} \gamma_{1, n-2}^{\alpha_2} \gamma_{2, n-2}^{\beta_2} \cdots \gamma_{1, n-d}^{\alpha_d} \gamma_{2, n-d}^{\beta_d}, \quad \gamma_{2, n} = \gamma_{2, n-1}^d,
$$
where $c$ is a constant, and $\alpha_i + \beta_i = \xi_i$ for all $i$. A straightforward examination indicates that $\mathrm{deg}{X} = \ln \lambda$ with $\lambda = \max \{x: x^d - \sum_{i=1}^d \alpha_i x^{d-i} = 0\}$.

The proof is complete.
\end{proof}

\begin{remark}
Notably, $\xi_n = 0$ for $n \geq 2$ if and only if $G$ is a free monoid. In this case, $H = \{0, \ln 2, \ldots, \ln d\}$ is revealed in \cite{BCH-2017}.
\end{remark}

Theorem \ref{thm:degree-general-case-ksymbol} extends Theorem \ref{thm:degree-general-case-2symbol} to the general case. The proof is similar, thus it is omitted.

\begin{theorem}\label{thm:degree-general-case-ksymbol}
Let $\mathcal{M}$ be the set consisting of
\begin{equation*}
M=
\begin{pmatrix}
C_1& C_2& C_3& \cdots & C_d \\ 
I& 0 & \cdots & \cdots &0 \\ 
0&I & 0 &  \cdots  & 0\\ 
\vdots && \ddots& & \vdots  \\ 
0 & \cdots  & 0 &I&0%
\end{pmatrix}
\end{equation*}%
for some $l \times l$ matrices $C_i$, $l \leq k$, satisfying $\sum_{q=1}^l C_i (p, q) \leq \xi_i$ for all $1 \leq i \leq d$, $1 \leq p \leq l$.
The degree spectrum of $G$-SFTs is
\begin{align*}
H = \{\ln \lambda: \lambda \text{ is the spectral radius of } M \in \mathcal{M} \}.
\end{align*}
\end{theorem}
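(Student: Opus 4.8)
The plan is to establish the two set inclusions that together give the asserted equality $H = \{\ln \lambda : \lambda = \rho_M,\ M \in \mathcal{M}\}$, following the template of the proof of Theorem \ref{thm:degree-general-case-2symbol} but now accommodating an arbitrary number $l \le k$ of essential symbols in place of the single essential symbol of the two-symbol case. As there, the two directions are a containment establishing that every realizable degree has the stated spectral form, and a realization establishing that every admissible $M$ is achieved by some $G$-SFT.

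For the inclusion $H \subseteq \{\ln \lambda : \lambda = \rho_M\}$, I would start with an arbitrary $G$-SFT $X$ and invoke Proposition \ref{prop:degree-exists-related-essential} to discard the inessential symbols, leaving a set $\mathcal{E}$ of essential symbols with $|\mathcal{E}| = l \le k$. By Theorem \ref{thm:degree-algorithm}, $\mathrm{deg}(X)$ is the largest $\ln \rho_M$ over adjacency matrices $M$ of simple subsystems of $X$, and by the Remark following Theorem \ref{thm:degree-essential-case} these matrices may be restricted to the rows and columns indexed by $\mathcal{E}$. The decomposition $\gamma_{i,n} = f_1 f_2 \cdots f_\ell$ obtained in the proof of Theorem \ref{thm:degree-essential-case}, in which $f_s$ is a polynomial of degree $\xi_s$ in $\gamma_{1,n-s}, \ldots, \gamma_{k,n-s}$, shows that each simple subsystem selects one monomial from each $f_s$; hence after restricting to $\mathcal{E}$ the resulting matrix takes exactly the block companion form displayed in the statement, with $l \times l$ blocks $C_s$ whose row sums satisfy $\sum_{q=1}^l C_s(p,q) \le \xi_s$, the inequality arising because part of the degree $\xi_s$ may be carried by columns indexed by inessential symbols. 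Thus every element of $H$ equals $\ln \rho_M$ for some $M \in \mathcal{M}$.

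For the reverse inclusion I would, given an arbitrary $M \in \mathcal{M}$ with blocks $C_1, \ldots, C_d$ meeting the prescribed row-sum bounds, construct a one-step $G$-SFT realizing $M$ by generalizing the explicit block construction $B = \bigcup_{i=1}^d B_i$ used in the two-symbol case. Using $l$ essential symbols together with inessential padding symbols to absorb the slack $\xi_i - \sum_q C_i(p,q)$, I would, for each essential symbol $p$ and each level $i$, prescribe the allowed labels on the supports $\overline{S}_i$ so that the count of admissible one-block extensions of a block rooted at $p$ factors as a monomial in which $\gamma_{q,n-i}$ appears with exponent exactly $C_i(p,q)$ for each essential $q$. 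Reading off the induced simple subsystem then yields precisely the adjacency matrix $M$ on the essential coordinates, so $\mathrm{deg}(X) = \ln \rho_M$ and $\ln \rho_M \in H$.

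The hard part will be the realization step. In the two-symbol case a single essential symbol needed tracking, so the combinatorics of distributing labels among free generators was comparatively transparent; for general $l$ one must simultaneously realize $l$ coupled recurrences whose cross-dependencies are the arbitrary entries $C_i(p,q)$. The delicate point is to design the local transition rules so that, for every essential root symbol $p$ and every level $i$, the number of allowed extensions into the subtrees hanging off the free generators factors exactly as $\prod_{q} \gamma_{q,n-i}^{C_i(p,q)}$ with the residual degree routed to inessential symbols; verifying that the bounds $\sum_q C_i(p,q) \le \xi_i$ are precisely what makes such a consistent assignment possible is where the bulk of the combinatorial bookkeeping lies.
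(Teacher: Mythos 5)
Your overall architecture is exactly the one the paper intends: the paper does not actually write out a proof of Theorem \ref{thm:degree-general-case-ksymbol} (it declares the proof ``similar'' to that of Theorem \ref{thm:degree-general-case-2symbol} and omits it), and your two halves---containment via Proposition \ref{prop:degree-exists-related-essential}, Theorem \ref{thm:degree-algorithm} and the decomposition $\gamma_{i,n}=f_1\cdots f_\ell$ from the proof of Theorem \ref{thm:degree-essential-case}, then realization by an explicit one-step construction $B=\bigcup_i B_i$ with padding symbols---are a faithful expansion of that template.

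There is, however, a genuine gap in the realization step, located exactly where you admit ``the hard part'' lies, and it is not mere bookkeeping. Your plan absorbs the slack $\xi_i-\sum_q C_i(p,q)$ by routing it to \emph{inessential padding symbols}, which presupposes that at least one of the $k$ alphabet symbols is left inessential; so the construction only covers $l\le k-1$, plus the case $l=k$ with all block row sums exactly $\xi_i$ (where $\rho_M=\rho_A$ anyway, by the Perron eigenvector $(\rho_A^{d-1},\dots,\rho_A,1)'\otimes\mathbf{1}_k$ of Theorem \ref{thm:degree-essential-case}). For $M\in\mathcal{M}$ with $l=k$ and strict slack in some row, no padding symbol exists, and within the one-step framework underlying all the quoted results such an $M$ is in fact \emph{not} realizable: if all $k$ symbols of a one-step $G$-SFT are essential, every simple subsystem has block row sums exactly $\xi_i$ (this is precisely what the proofs of Theorem \ref{thm:degree-essential-case} and Corollary \ref{cor:iff-cond-for-full-degree} show), forcing degree $\ln\rho_A$. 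Concretely, take $G$ free on two generators, so $\xi=(2,0)$, $k=2$, and $C_1=\begin{pmatrix}1&1\\1&0\end{pmatrix}$, $C_2=0$; then $\rho_M$ is the golden mean, whereas Theorem \ref{thm:degree-general-case-2symbol} (and the cited result of \cite{BCH-2017}) give the spectrum $\{0,\ln 2\}$. So the extremal case $l=k$ with slack cannot be closed by your construction: one must either read the statement with $l<k$ (or with equality of row sums when $l=k$), or leave the one-step setting and realize slack by context-dependent ``frozen'' configurations (higher-step forbidden blocks under which an essential symbol admits a unique continuation in certain contexts)---and that escape route in turn threatens your containment direction, since recoding a multi-step $k$-symbol SFT to a one-step one produces more than $k$ symbols and thus breaks the bound $l\le k$. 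You should flag this case explicitly rather than assert that the row-sum bounds alone make the assignment possible.
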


Corollary \ref{cor:iff-cond-for-full-degree}, follows from the proof of Theorem \ref{thm:degree-general-case-2symbol}, elaborates a necessary and sufficient condition of a $G$-SFT achieved full degree.

\begin{corollary}\label{cor:iff-cond-for-full-degree}
Suppose that $X$ is a $G$-SFT. Then $\mathrm{deg}(X) = \ln \rho_A$ if and only if the essential symbols form a subshift on right free generators; that is, for each $s \in S_R$ and $\phi$ is a one-block with support $\mathrm{supp}(\phi) = s \Sigma$, $\phi_g$ is essential for $g \in \mathrm{supp}(\phi)$.
\end{corollary}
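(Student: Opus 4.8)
The plan is to read the claim off the analysis already carried out for Theorem \ref{thm:degree-general-case-2symbol} and Theorem \ref{thm:degree-general-case-ksymbol}, specializing the description of simple subsystems so as to detect exactly when the maximal spectral radius reaches $\rho_A$. By Proposition \ref{prop:degree-exists-related-essential} only the essential symbols $\mathcal{E}$ contribute, so Theorem \ref{thm:degree-algorithm} together with Theorem \ref{thm:degree-general-case-ksymbol} gives $\mathrm{deg}(X) = \max\{\ln \rho_M\}$, where $M$ ranges over the block-companion matrices assembled from the essential-symbol restrictions $C_i$ of the simple subsystems, subject to $\sum_q C_i(p,q) \leq \xi_i$, with each $C_i$ of size $l \times l$, $l = |\mathcal{E}|$.

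First I would record the universal upper bound $\rho_M \leq \rho_A$. Taking $v = (\rho_A^{d-1}, \ldots, \rho_A, 1)' \otimes \mathbf{1}_l$ and using that $\rho_A$ is the largest root of $\lambda^d - \sum_i \xi_i \lambda^{d-i}$ (Theorem \ref{thm:char-poly-A-xi}), the row-sum bound $\sum_q C_i(p,q) \leq \xi_i$ yields $Mv \preceq \rho_A v$ entrywise; since $v > 0$, Perron--Frobenius gives $\rho_M \leq \rho_A$, hence $\mathrm{deg}(X) \leq \ln \rho_A$ in all cases.

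The core is a tightness dichotomy: $\rho_M = \rho_A$ for the dominant simple subsystem if and only if every attainable row sum meets its bound, $\sum_q C_i(p,q) = \xi_i$. The ``if'' is precisely the Perron--Frobenius computation in the proof of Theorem \ref{thm:degree-essential-case}, where equality of row sums turns $Mv \preceq \rho_A v$ into $Mv = \rho_A v$. For the ``only if'' I would invoke strict monotonicity of the spectral radius on the relevant irreducible component: if some row sum were strictly below $\xi_i$ along a row in the support of the Perron eigenvector, the subinvariance inequality would be strict there, forcing $\rho_M < \rho_A$. I would then read off the combinatorial meaning of the row sums: the $\xi_j$ factors of $f_j$ are indexed by the $j$-words terminating in a right free generator, and selecting a term in each factor is exactly choosing the essential label carried by that right-free-generator child, so the restricted row sum counts how many of those $\xi_j$ children receive an essential label. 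Thus ``all row sums equal $\xi_i$ for some simple subsystem'' says precisely that every right-free-generator child can be labeled by an essential symbol consistently, which is the statement that the essential symbols form a subshift over the right free generators.

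The main obstacle I anticipate is the forward direction, namely upgrading ``the dominant simple subsystem attains all its row-sum bounds'' to the universal assertion that for every $s \in S_R$ and every admissible one-block $\phi$ with $\mathrm{supp}(\phi) = s\Sigma$ all entries $\phi_g$ are essential. I would handle this by contraposition: if the essential symbols fail to form a subshift on the right free generators, then no admissible labeling of the right-free skeleton keeps every branch essential, so in every simple subsystem at least one factor of some $f_j$ is forced onto an inessential symbol, dropping that row sum strictly below $\xi_j$; by the tightness dichotomy $\rho_M < \rho_A$ for every $M$, whence $\mathrm{deg}(X) < \ln \rho_A$. Making ``at least one branch is lost in every simple subsystem'' precise, together with verifying that the Perron eigenvector has full support so that strict monotonicity bites on the offending row, is the delicate point; it rests on the reachability of every essential symbol along right-free-generator paths, which is where the finite representation of $G$ and the bound $\gamma_{a,n} \leq 1$ for inessential $a$ enter.
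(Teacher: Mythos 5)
Your overall skeleton is the same as the paper's: reduce via Proposition \ref{prop:degree-exists-related-essential} and Theorem \ref{thm:degree-algorithm} to block-companion matrices over the essential alphabet, bound $\rho_M \leq \rho_A$ by subinvariance against $v = (\rho_A^{d-1},\ldots,\rho_A,1)' \otimes \mathbf{1}_l$, and detect equality through row sums. The paper executes this only for $k=2$, listing the three possible matrices $M_1, M_2, M_3$ and reading the condition off each case; for $k=2$ your argument is correct and essentially identical. The genuine gap is your ``tightness dichotomy,'' which is \emph{false} as soon as there are at least two essential symbols and at least one inessential one --- and that is exactly the regime your general-$k$ forward direction must handle. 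Concretely, let $G$ be the free monoid on $\Sigma = \{s_1, s_2\}$ (so $\xi_1 = 2$, $\xi_i = 0$ for $i \geq 2$, $\rho_A = 2$), let $\mathcal{A} = \{1,2,3\}$, and take the one-step SFT in which root $1$ admits children $(1,1)$ or $(2,1)$, root $2$ admits $(3,1)$ or $(3,2)$, and root $3$ admits only $(3,3)$. Then $3$ is inessential while $1,2$ are essential, and in \emph{every} simple subsystem the row of symbol $2$ is deficient (its $s_1$-child is forced to be $3$), yet the subsystem $\gamma_{1,n} = \gamma_{1,n-1}^2$, $\gamma_{2,n} = \gamma_{1,n-1}$ has restricted matrix $C_1 = \left(\begin{smallmatrix} 2 & 0 \\ 1 & 0 \end{smallmatrix}\right)$ with spectral radius $2 = \rho_A$, so $\mathrm{deg}(X) = \ln \rho_A$. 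Thus ``some row sum strictly below $\xi_i$ in every simple subsystem'' does not force $\rho_M < \rho_A$: strict monotonicity bites only inside the irreducible class carrying the spectral radius, which here is $\{1\}$, not all of $\mathcal{E}$. Your hoped-for full support of the Perron eigenvector fails for the same reason (the left Perron eigenvector of $C_1$ is $(1,0)$), so the chain ``no all-essential labeling $\Rightarrow$ deficient row everywhere $\Rightarrow$ $\rho_M < \rho_A$'' has a broken last arrow.

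The repair is a class analysis rather than a row-by-row one: for a nonnegative block-companion $M$ built from blocks $C_i$ with row sums $\leq \xi_i$, one has $\rho_M = \rho_A$ if and only if there is a \emph{nonempty} $S \subseteq \mathcal{E}$ such that $C_i(p,q) = 0$ for all $p \in S$, $q \notin S$, and $\sum_{q \in S} C_i(p,q) = \xi_i$ for all $p \in S$, $1 \leq i \leq d$. (If such $S$ exists, the principal submatrix fixes $(\rho_A^{d-1},\ldots,1)' \otimes \mathbf{1}_{|S|}$; conversely, a basic class $K$ with $\rho_{M_K} = \rho_A$ must satisfy $M_K w_K = (Mw)_K = \rho_A w_K$ for the positive vector $w$ above, which forces $K$ to have no edges leaving it and full row sums inside it.) Combinatorially, $S$ is a nonempty family of essential symbols closed under the right-free-generator children relation, so the corollary's condition must be read existentially --- some such closed family exists --- rather than as the universal statement that every one-block rooted at a right free generator carries only essential labels; under the universal reading my example above falsifies the corollary itself. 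The paper never confronts any of this because for $k=2$ the dichotomy is vacuous: with both symbols essential no column is deleted and no row can be deficient, and with one essential symbol there is a single row per block. Your decision to attack general $k$ directly is more ambitious than the paper's ``the general case is analogous,'' but it stands or falls on replacing the all-rows dichotomy by the closed-subset version, which your sketch gestures at but does not supply.
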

\begin{proof}
It suffices to consider the case where $k = 2$ since the demonstration of the general case is analogous but more complicated. Recall that, in the proof of Theorem \ref{thm:degree-general-case-2symbol}, every simple subsystem of $X$ is of the form
\begin{align*}
\gamma_{1,n} &= \gamma_{1,n-1}^{\eta_1}\gamma_{2,n-1}^{\tau_1}\gamma_{1,n-2}^{\eta_2}\gamma_{2,n-2}^{\tau_2}\cdots\gamma_{1,n-d}^{\eta_d}\gamma_{2,n-d}^{\tau_d}, \\
\gamma_{2,n} &= \gamma_{1,n-1}^{\delta_1}\gamma_{2,n-1}^{\iota_1}\gamma_{1,n-2}^{\delta_2}\gamma_{2,n-2}^{\iota_2}\cdots\gamma_{1,n-d}^{\delta_d}\gamma_{2,n-d}^{\iota_d},
\end{align*}
where $\eta_i + \tau_i = \delta_i + \iota_i = \xi_i$ for $1 \leq i \leq d$. In other words, $\mathrm{deg}(X) = \ln \lambda$, where $\lambda$ is the spectral radius of one of the following matrix, which depends on the essential of symbols.
\begin{align*}
M_1 &=
\begin{pmatrix}
\eta_1& \tau_1& \eta_2   & \tau_2   &\cdots & \eta_d   & \tau_d \\ 
\delta_1  & \iota_1  &   \delta_2   &   \iota_2   &\cdots &\delta_d   &\iota_d \\ 
1  &    &       &       &       & 0     &0 \\ 
   & 1  &       &       &       &\vdots & \vdots\\
   &    &\ddots &       &       &\vdots &\vdots\\
   &    &       & \ddots&       &\vdots &\vdots \\ 
   &    &       &       &   1   &0      &0%
\end{pmatrix}, \\
M_2 &=
\begin{pmatrix}
\eta_1& \eta_2  &\cdots& \eta_d   \\ 
1  &      &      &0 \\ 
   &\ddots&      &\vdots\\
   &      &   1  &0
\end{pmatrix}, \quad M_3 =
\begin{pmatrix}
\iota_1& \iota_2  &\cdots& \iota_d   \\ 
1  &      &      &0 \\ 
   &\ddots&      &\vdots\\
   &      &   1  &0
\end{pmatrix}.
\end{align*}
It follows that $\lambda = \rho_A$ if and only if exactly one of the following three conditions holds.
\begin{enumerate}[\bf a.]
\item (Case $M_1$) Two symbols are essential.
\item (Case $M_2$) Symbol $1$ is essential and $\eta_i = \xi_i$ for $1 \leq i \leq d$.
\item (Case $M_3$) Symbol $2$ is essential and $\iota_i = \xi_i$ for $1 \leq i \leq d$.
\end{enumerate}
This completes the proof.
\end{proof}

\section{Groups with Finite Free-Followers}\label{sec:finite-follower-case}

Suppose that $G$ is a monoid. For each $g \in G$, defined the \emph{free-follower set} (free-follower for short) of $g$ as
\begin{equation}\label{eq:follower-set-definition}
F_g = \{g' \in G: |gg'| = |g| + |g'| \}.
\end{equation}
Set $F = \{F_g: g \in G\}$. Then $G$ \emph{has finite free-followers} if $F$ is finite. It is easily seen that every finitely generated free monoid $G$ has finite -free-followers since $F_g  = G$ for each $g \in G$. The investigation in Sections \ref{sec:degree-essential-case} and \ref{sec:degree-spectrum} extends to the case where $G$ has finite free-followers via analogous elaboration. This section, rather than rephrases every result in the previous two sections, presents an example to address how to compute the degree of a $G$-SFT ($G$ has finite free-followers herein) for the compactness of the paper.

Suppose that $d = k = 2$. In this case, $\Sigma = \{s_1, s_2\}$ and $\mathcal{A} = \{1, 2\}$. Let $G = \langle \Sigma | R \rangle$ be the monoid with $R = \{s_2 s_1^{2i+1} s_2 = s_2\}_{i \geq 0}$. It follows that $G$ has finite free-followers. Indeed, let
\begin{align*}
&F_{s_1} = \{s_1, s_2, s_1^2, s_1 s_2, s_2 s_1, s_2^2, \ldots\} = G, \\
&F_{s_2} = \{s_1, s_2, s_1^2, s_2 s_1, s_2^2, s_1^3, s_1^2 s_2, \ldots\} = \{s_1^n\}_{n \geq 1} \bigcup \{s_1^{2i} s_2 g: g = s_1^n, s_1^{2j} s_2^n, i, j, n \geq 0 \}, \\
&F_{s_2 s_1} = \{s_1, s_1^2, s_1 s_2, s_1^3, \ldots\} = \{s_1^n\}_{n \geq 1} \bigcup \{s_1^{2i+1} s_2 g: g = s_1^n, s_1^{2j} s_2^n, i, j, n \geq 0 \}.
\end{align*}
An examination indicates that, for each $g \in G$,
$$
F_g = \left\{ \begin{aligned}
&F_{s_1}, && g = s_1^n; \\
&F_{s_2}, && g \text{ ends in } s_2 s_1^{2i}, i \geq 0; \\
&F_{s_2 s_1}, && g \text{ ends in } s_2 s_1^{2i+1}, i \geq 0.
\end{aligned}\right.
$$
A straightforward examination elaborates that there is a one-to-one correspondence between the monoid $G$ and the set of finite words of one-dimensional even-shift.

Let $X$ be a hom-shift on $G$ determined by
$T = \begin{pmatrix}
1 & 1 \\
1 & 1
\end{pmatrix}$. Alternatively, $X$ is a full $G$-shift; it follows immediately that $\mathrm{deg}(X) = \ln \lambda$, where $\lambda = \frac{1 + \sqrt{5}}{2}$ satisfies $\lambda^2 - \lambda - 1 = 0$. The following shows that the algorithm in Section \ref{sec:degree-essential-case} derives the same result.

Observe that $\gamma_{i, n}^{[g]} = \gamma_{i, n}$ for $i = 1, 2$ since $F_{s_1^j} = G$ for $j \in \mathbb{N}$. For $i = 1, 2$,
\begin{align*}
\gamma_{i, n} &= (\gamma_{1, n-1}^{[s_1]} + \gamma_{2, n-1}^{[s_1]}) (\gamma_{1, n-1}^{[s_2]} + \gamma_{2, n-1}^{[s_2]}) \\
 &= (\gamma_{1, n-1} + \gamma_{2, n-1}) (\gamma_{1, n-1}^{[s_2]} + \gamma_{2, n-1}^{[s_2]})
\end{align*}
Also, $F_{s_2^2} = F_{s_2}$ and $F_{s_2 s_1^2} = F_{s_2}$ infer that
\begin{align*}
\gamma_{i, n-1}^{[s_2]} &= (\gamma_{1, n-2}^{[s_2 s_1]} + \gamma_{2, n-2}^{[s_2 s_1]}) (\gamma_{1, n-2}^{[s_2 s_2]} + \gamma_{2, n-2}^{[s_2 s_2]}) \\
 &= (\gamma_{1, n-2}^{[s_2 s_1]} + \gamma_{2, n-2}^{[s_2 s_1]}) (\gamma_{1, n-2}^{[s_2]} + \gamma_{2, n-2}^{[s_2]}) \\
\gamma_{i, n-2}^{[s_2 s_1]} &= \gamma_{1, n-3}^{[s_2 s_1^2]} + \gamma_{2, n-3}^{[s_2 s_1^2]} = \gamma_{1, n-3}^{[s_2]} + \gamma_{2, n-3}^{[s_2]}
\end{align*}
Hence, the SNRE of $X$ is
\begin{align*}
\gamma_{i, n} &=  2 (\gamma_{1, n-1} + \gamma_{2, n-1}) (\gamma_{1, n-2}^{[s_2]} + \gamma_{2, n-2}^{[s_2]}) (\gamma_{1, n-3}^{[s_2]} + \gamma_{2, n-3}^{[s_2]})
\end{align*}
for $i = 1, 2$.
Let $\theta_n = (\ln \gamma_{1, n}^{[s_2]}, \ln \gamma_{2, n}^{[s_2]}, \ln \gamma_{1, n-1}^{[s_2]}, \ln \gamma_{2, n-1}^{[s_2]})'$ and let
$$
M = \begin{pmatrix}
\eta_1 & \tau_1 & \eta_2 & \tau_2 \\
\delta_1 & \iota_1 & \delta_2 & \iota_2 \\
1 & 0 & 0 & 0 \\
0 & 1 & 0 & 0
\end{pmatrix}.
$$
Then, every simple subsystem of the invariant system $\ln \gamma_{1, n}^{[s_2]}, \ln \gamma_{2, n}^{[s_2]}$ is of the form $\theta_n = M \theta_{n-1}$ with $\eta_j + \tau_j = \delta_j + \iota_j = 1$ for $1 \leq j \leq 2$. It follows that $\ln \gamma_{i, n}^{[s_2]} \approx e^{\lambda n}$ for $i = 1, 2$ and $n$ large enough.

Furthermore, every simple subsystem of $X$ is of the form
\begin{align*}
\ln \gamma_{1, n} &\approx \eta \ln \gamma_{1, n-1} + \tau \ln \gamma_{2, n-1} + e^{(n-2) \lambda} + e^{(n-3) \lambda} \\
\ln \gamma_{2, n} &\approx \delta \ln \gamma_{1, n-1} + \iota \ln \gamma_{2, n-1} + e^{(n-2) \lambda} + e^{(n-3) \lambda}
\end{align*}
where $\eta + \tau = \delta + \iota = 1$. A straightforward examination shows that
$$
\mathrm{deg}(X) = \lim_{n \to \infty} \dfrac{\ln (\ln \gamma_{1, n} + \ln \gamma_{2, n})}{n} = \ln \lambda.
$$
This concludes the desired result.


\bibliographystyle{amsplain}
\bibliography{../../grece}

\end{document}